\documentclass[12pt,reqno]{amsart}
\usepackage{amsmath,amssymb,amsthm,mathtools,wasysym,calc,verbatim,enumitem,tikz,pgfplots,url,hyperref,mathrsfs,cite,fullpage,bbm}

\usepackage{comment}

\newtheorem{theorem}{Theorem}

\newtheorem{lemma}[theorem]{Lemma}

\newtheorem{fact}[theorem]{Fact}

\newtheorem*{claim*}{Claim}

\theoremstyle{remark}
\newtheorem*{remark*}{Remark}

\numberwithin{theorem}{section}

\renewcommand{\phi}{\varphi}

\renewcommand{\leq}{\le}
\renewcommand{\geq}{\ge}
\newcommand{\one}{\mathbf{1}}

\newcommand{\eps}{\varepsilon}

\newcommand{\cF}{\mathcal F}

\newcommand{\cP}{\mathcal P}

\newcommand{\E}{\mathbb{E}}

\def\1{\mathbbm{1}}

\def\g{{\gamma}}

\renewcommand{\le}{\leqslant}
\renewcommand{\ge}{\geqslant}
\renewcommand{\P}{\mathbb{P}}

\newcommand{\R}{\mathbb R}

\newcommand{\bX}{\mathbf X}

\newcommand{\Vol}{\mathrm{Vol}}
\newcommand{\Gbar}{\bar{G}}

\newcommand{\dist}{\operatorname{dist}}

\title
{A new lower bound for sphere packing} 
\author{Marcelo Campos, Matthew Jenssen, Marcus Michelen, Julian Sahasrabudhe}

\address{University of Cambridge. Trinity College.}
\email{mc2482@cam.ac.uk}

\address{King's College London, Department of Mathematics.}
\email{matthew.jenssen@kcl.ac.uk}
\address{University of Illinois, Chicago. Dept of Mathematics, Statistics and Computer science.}
\email{michelen@uic.edu}
\address{University of Cambridge. Department of Pure Mathematics and Mathematical Statistics.} 
\email{jdrs2@cam.ac.uk}

\pagestyle{plain}

\begin{document}
	\begin{abstract}
 We show there exists a packing of identical spheres in $\mathbb{R}^d$ with density at least
 \[
  (1-o(1))\frac{d \log d}{2^{d+1}}\, ,
 \]
 as $d\to\infty$.
This improves upon previous bounds for general $d$ by a factor of order $\log d$ and is the first asymptotically growing improvement to Rogers' bound from 1947.
	\end{abstract}	
\maketitle

\vspace{-2em}

\section{Introduction}

In this paper we consider the classical \emph{sphere packing problem}: what is the maximum proportion of $\R^d$ that can be covered by a collection of disjoint spheres of volume $1$? Determining the value of this maximum, denoted $\theta(d)$, has proven to be an exceedingly challenging problem
and is only known in dimensions $d\in \{1,2,3,8,24\}$. While the $d=1$ case is trivial and $d=2$ is classical \cite{thue1911dichteste}, the remaining cases are the result of a series of extraordinary breakthroughs: dimension 3 was a landmark achievement of Hales \cite{hales2005proof}, resolving the Kepler conjecture from 1611. Dimensions 8 and 24 were resolved only recently due to the major breakthroughs of Viazovska~\cite{viazovska2017sphere}, in dimension $8$, and then Cohn, Kumar, Miller, Radchenko, and Viazovska~\cite{cohn2016sphere} in dimension $24$. (See~\cite{cohn2016conceptual} for a beautiful exposition of these developments).

Here we study $\theta(d)$ for \emph{large} $d$ where the nature of optimal sphere packings remains almost entirely mysterious, with many basic questions left unresolved. Here the trivial lower bound is $\theta(d) \geq 2^{-d}$, which follows by considering any saturated collection of disjoint spheres. In 1905, this was improved by a factor of $2+o(1)$ by Minkowski~\cite{Min05}. In 1947, Rogers~\cite{rogers1947existence} made the first asymptotically growing improvement to the trivial lower bound showing that $\theta(d)\geq (1+o(1))cd2^{-d}$ where $c = 2/e$. Since then, a number of improvements have been made to the constant factor $c$. Davenport and Rogers~\cite{davenport1947hlawka} showed $c = 1.68$; Ball~\cite{ball1992lower}, some 45 years later, improved the bound to $c = 2$; and Vance~\cite{vance2011improved} showed $c = 6/e$ when $d$ is divisible by $4$. The most recent improvement was made by Venkatesh~\cite{venkatesh2012note} who showed that one can take $c= 65963$. Moreover he showed that one can obtain an additional $\log \log d$ factor along a sparse sequence of dimensions.  On surveying this sequence of results Venkatesh writes: `\emph{It is striking that the quoted papers use quite different methods and yet all arrive at the same linear improvement
on Minkowski’s bound.}' In this paper, we go beyond this barrier and improve Minkowski's bound by a factor of order $d \log d$ in all dimensions. 

\begin{theorem}\label{thm:main}
   \[
    \theta(d) \geq (1-o(1)) \frac{d\log d}{2^{d+1}}\, ,\] as $d\to\infty$. 
\end{theorem}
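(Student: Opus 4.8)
The plan is to produce a random sphere packing via a Poisson-type process and then use a careful deletion/shifting argument to remove overlaps with only a small loss. More precisely, I would follow the "graph-free process" philosophy: sample points in a large torus $\bT^d = \R^d/L\Z^d$ as a Poisson point process of intensity $\lambda$, form the "conflict graph" $G$ on these points where two points are adjacent if they lie within distance $2r$ (so the corresponding unit-volume balls of radius $r$ overlap), and then extract a large independent set in $G$. The trivial greedy/Turán bound on the independence number only recovers Minkowski's $2^{-d}$; Rogers-type arguments recover the extra linear factor $d$ by exploiting that $G$ is locally sparse. To gain the additional $\log d$ one must exploit the near-Poisson local structure of $G$ more cleverly — this is where I would invoke a recent result on independent sets in sparse graphs of bounded local density, in the spirit of the Ajtai–Komlós–Szemerédi theorem and its refinements, which gives an independent set of size $\sim n\frac{\log \bar d}{\bar d}$ when the average degree is $\bar d$ and the graph is locally sparse (few triangles, bounded codegrees).

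Concretely, the key steps in order would be: \textbf{(1)} Fix the intensity $\lambda$ of the Poisson process so that the expected degree $\bar d$ in $G$ is roughly $2^d \lambda \cdot (\text{const})$ — tuned near the threshold where the new bound kicks in. \textbf{(2)} Verify the local sparsity hypotheses: show that the expected number of triangles through a typical vertex, and more generally the local edge density in a neighborhood, is small; geometrically this amounts to the fact that the intersection of two balls of radius $2r$ has volume exponentially smaller than a single such ball, i.e.\ $\vol(B(x,2r)\cap B(y,2r)) = o(\vol B(0,2r))$ uniformly for the relevant configurations — a statement that follows from concentration of measure on the sphere (most of the mass of a ball is near its boundary). \textbf{(3)} Apply the independent-set theorem to a typical instance of $G$ to obtain an independent set $I$ of expected size $\sim n \frac{\log \bar d}{\bar d}$; the packing density is then $\sim \frac{|I|}{n} \cdot \bar d \cdot 2^{-d} \sim 2^{-d}\log \bar d \sim 2^{-d} \cdot d\log 2 = \Theta(d \cdot 2^{-d})$ — wait, that only gives the linear factor, so in fact one must choose $\lambda$ \emph{smaller} so that $\bar d \to \infty$ polynomially faster, trading off: the density is maximized (heuristically) when $\log \bar d \sim \log d + \log\log d$, i.e.\ $\bar d \sim d \log d$, yielding density $\sim 2^{-d}\log(d\log d) \sim 2^{-d}\log d$. \textbf{(4)} Pass from the torus to $\R^d$ by a standard tiling/limiting argument and absorb all the $(1+o(1))$ and $(1-o(1))$ factors, being careful that the constant in front comes out to exactly $\tfrac12$ as in the statement (the extra factor of $2$ over the naive $d\log d/2^d$ presumably comes from a Minkowski-type symmetrization, or from the precise form of the independence-number bound).

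The main obstacle, I expect, is \textbf{Step (3)} — getting the constant exactly right and, more seriously, verifying that the hypotheses of the strongest available independent-set theorem are genuinely met by the geometric conflict graph. Generic "locally sparse" theorems require control not just on triangles but on the entire local neighborhood structure (e.g.\ that every vertex neighborhood spans few edges, or that the graph has no dense patches); translating these into statements about volumes of intersections of multiple balls of radius $2r$ centered at near-Poisson points, uniformly, requires sharp geometric estimates. A secondary obstacle is that off-the-shelf versions of such theorems may lose constant or even $\log\log$ factors, so one likely needs a bespoke argument — perhaps a direct second-moment or entropy-compression analysis of a randomized greedy independent-set process run on the Poisson configuration — tailored to extract the clean $(1-o(1))\tfrac{d\log d}{2^{d+1}}$. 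Handling the boundary effects of the torus and the saturation condition cleanly, while standard, also needs care to avoid eroding the main term.
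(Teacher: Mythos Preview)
Your high-level strategy --- sample a Poisson process, form the conflict graph, and extract a large independent set --- matches the paper's. But Step~(3) contains a genuine and fatal miscalculation of the parameter regime, and the ``off-the-shelf'' independent-set theorem you reach for does not exist in the form you need.

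Look at your own arithmetic. You correctly observe that the packing density is $\sim 2^{-d}\log\bar d$. To reach $d\log d \cdot 2^{-d}$ you therefore need $\log\bar d \sim d\log d$, i.e.\ $\bar d$ of order $d^{\Theta(d)}$ --- \emph{not} $\bar d \sim d\log d$ as you wrote (which gives only $2^{-d}\log d$, worse than Rogers), and not $\bar d \sim 2^d$ (which recovers Rogers). The paper takes $\Delta = (\sqrt{d}/(4\log d))^d$, so that $\log\Delta = (1-o(1))\tfrac{d}{2}\log d$ and the density comes out to $(1-o(1))\tfrac{d\log d}{2^{d+1}}$ with the correct constant. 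Your instinct to make $\lambda$ \emph{smaller} is exactly backwards: you must take $\lambda$ enormous, of order $d^{\Theta(d)}$ per unit volume.

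This is where the real obstacle lies, and it is not the one you identified. At intensity $\lambda \sim d^{\Theta(d)}$, the conflict graph is nowhere near triangle-free or ``locally sparse'' in the Ajtai--Koml\'os--Szemer\'edi / Shearer sense: every neighbourhood is packed with edges. The paper remarks explicitly that Shearer's theorem as a black box yields only independence ratio $(\log\log\Delta)/\Delta$ here, which is far too weak. What survives at this density is a \emph{codegree} bound: because $\Vol(B_x(2r)\cap B_y(2r)) \le 2^d e^{-\|x-y\|^2/4}$, most pairs have codegree $\le \Delta \cdot e^{-(\log d)^2/8}$, and one can delete the few bad points (Lemma~\ref{prop:preprocessing}). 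The paper's main new tool is Theorem~\ref{thm:ind-graph}: if $\Delta(G)\le\Delta$ and $\Delta_2(G)\le \Delta(\log\Delta)^{-7}/128$, then $\alpha(G)\ge(1-o(1))n\log\Delta/\Delta$. This is proved by a R\"odl-nibble with an additional regularization step (re-adding edges between rounds to keep the graph nearly regular), and it is not a consequence of any prior locally-sparse independent-set theorem. Your proposal treats this as something to cite; in fact it is the heart of the paper.
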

In each of the aforementioned papers, the sphere packings arise from collections of spheres that are centred on \emph{lattices}. By contrast, our packings are highly \emph{disordered} and constructed by recursively adding random spheres to $\R^d$, in rounds, that are centred on the points of an appropriately modified Poisson process. Until now, nothing has been shown to beat (highly symmetric) lattice constructions in high dimensions, even though some have speculated that \emph{optimal} packings may in fact be ``disordered.'' Indeed, Cohn~\cite{cohn2016packing} observes, `\emph{One philosophical quandary is that too much structure seems to make high dimensional packing bad, but the known lower bounds all rely on some sort of heavy structure... The trade off is that structure hurts density but helps in analyzing packing}'.

We point out that, while our bounds improve the situation for $\theta(d)$,
there still remains an exponential gap between the upper and lower bounds; the best upper bound in high dimensions is $\theta(d) \le 2^{- (.599\dots +o_d(1)) d}$ which is due to the 1978 work of Kabatjanski\u\i\, and Leven\v ste\u\i n~\cite{kabatiansky1978bounds} and has only been improved by a multiplicative constant factor in the years since by Cohn and Zhao~\cite{cohn2014sphere} and Sardari and Zargar~\cite{sardari2023new}.

\subsection{Spherical codes} A further consequence of our method is that it adapts naturally to the problem of constructing large \emph{spherical codes} in high dimension, which is not true of the lattice constructions discussed above. A spherical code is a collection of points on the unit sphere $S^{d-1}$ of pairwise angle at least $\theta$. That is, a packing of spherical caps of angular radius $\theta/2$. We let $A(d,\theta)$ be the largest possible size of such a collection and  $s_d(\theta)$ be the surface area of a spherical cap of angular radius $\theta$ normalised by the surface area of $S^{d-1}$. 

While the case $\theta \ge \pi/2$ was settled exactly in 1955 by Rankin~\cite{rankin1955closest}, the state of knowledge for $\theta < \pi/2$ is similar to the case of sphere packings, with exponential gaps between the upper and lower bounds. A lower bound of $ A(d,\theta)  \geq 1/s_d(\theta)$ comes from simply considering a saturated packing of spherical caps, while the best known upper bounds come from the important paper of Kabatjanski\u\i\, and Leven\v ste\u\i n~\cite{kabatiansky1978bounds}.

The lower bound was recently improved by the second author, Joos and Perkins~\cite{jenssen2018kissing} who showed that $A(d,\theta) \geq c d /s_d(\theta)$ for a constant $c>0$, the value of which was subsequently improved by Fern\'{a}ndez, Kim, Liu and  Pikhurko~\cite{fernandez2021new}.  The methods of this paper can be used to give a further asymptotically growing improvement to the lower bound. 

\begin{theorem}\label{thm:sphere-codes} If $\theta \in (0,\pi/2),$ then
\[ A(d,\theta)\geq (1-o(1)) \frac{d\log d}{2 s_d(\theta)} \]
 as $d \rightarrow \infty$.
\end{theorem}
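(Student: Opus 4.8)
The plan is to mirror the sphere-packing construction (Theorem 1.1) in the spherical setting, replacing $\R^d$ with the sphere $S^{d-1}$ equipped with its geodesic metric and normalized surface measure, and replacing the Poisson process on Euclidean space with a Poisson process of caps on $S^{d-1}$. Concretely, I would fix the forbidden angular distance $\theta$ and declare two points of $S^{d-1}$ ``compatible'' if their geodesic angle exceeds $\theta$; a spherical code is then an independent set in the corresponding (infinite) conflict graph, and $s_d(\theta)$ plays the role that $2^{-d}$ plays in the packing problem (the measure of a single ``exclusion region''). The first step is to run the same recursive/rounds-based random deletion process used in the Euclidean argument: place an intensity-$\lambda$ Poisson process of points on $S^{d-1}$, and in rounds greedily keep points while deleting all points within angle $\theta$ of an already-accepted point, where the key gain (the extra $\log d$ over the naive $1/s_d(\theta)$) comes from choosing $\lambda$ of order $(d\log d)/s_d(\theta)$ and showing that the process nonetheless retains a $(1-o(1))/(d\log d)$ fraction of the expected points, so that $\E|\text{code}| \ge (1-o(1))\,d\log d/(2 s_d(\theta))$. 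One then concludes by the probabilistic method that a code of at least the expected size exists.

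The key steps, in order, are: (i) set up the conflict graph on $S^{d-1}$ and record the two geometric facts that drive the argument — that a cap of angular radius $\theta$ has normalized measure $s_d(\theta)$, and that pairwise ``overlap'' probabilities (the chance that two independently $\lambda$-random points both conflict with a common third point, i.e.\ the local density of the conflict graph) decay exactly as in the Euclidean case up to $1+o(1)$ factors; (ii) verify the hypotheses of whatever abstract ``independent set in a sparse graph / random greedy'' lemma underlies the proof of Theorem 1.1 — essentially that the conflict graph is locally sparse enough that the Rödl-nibble/rounds analysis yields the $\log$-factor improvement; (iii) run the rounds analysis verbatim, tracking that at each round the surviving intensity is multiplied by the appropriate factor and that after $O(\log d)$ rounds one has a genuine spherical code of the claimed expected size; (iv) derandomize via expectation. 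Throughout, the normalization by $s_d(\theta)$ rather than $2^{-d}$ is the only bookkeeping change, and the hypothesis $\theta < \pi/2$ enters to guarantee $s_d(\theta) = 2^{-\Theta(d)}$ (so that the caps are exponentially small and the asymptotic regime is the relevant one) and to keep the relevant overlap integrals in the same range as the Euclidean argument.

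The main obstacle I anticipate is checking that the geometric estimates transfer cleanly: in $\R^d$ the group of translations acts transitively and the ``volume of intersection of two balls as a function of the distance between centers'' has a clean closed form, whereas on $S^{d-1}$ one must work with the measure of the intersection of two spherical caps as a function of their angular separation and confirm that, in the exponential scale and for $\theta<\pi/2$, this behaves like its Euclidean analogue up to $1+o(1)$ corrections — in particular that the ``effective codegree'' governing the nibble is $(1+o(1))$ times what it is in the packing problem. A secondary point is that $S^{d-1}$ is compact, so the Poisson process has finitely many points and one should be slightly careful that boundary-of-the-round effects and the finiteness do not cost more than a $1-o(1)$ factor; this is if anything easier than the Euclidean case, where one must localize to a large torus or box first. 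Once these estimates are in place, the remainder of the proof is a line-by-line translation of the proof of Theorem 1.1, so I would present it as such, citing the Euclidean argument for the steps that are formally identical and giving full details only for the two or three spherical-geometry lemmas.
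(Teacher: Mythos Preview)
Your high-level plan is essentially the paper's: run a Poisson process on $S^{d-1}$, delete points that violate degree/codegree bounds to obtain a finite set $X$, and then apply the graph-theoretic tool Theorem~\ref{thm:ind-graph} to the conflict graph $G(X,\theta)$. You are also right that the only genuinely new ingredient is the spherical-cap intersection estimate (the analogue of Fact~\ref{fact:sphere-intersection}), and this is exactly what the paper supplies, proving $s(C_\theta(x)\cap C_\theta(y)) \le (1+o(1))\, s_d(\theta)\, e^{-c(\theta)\tau^2 d}$ for caps at angular separation $\tau$, with $c(\theta) = (\cot^2\theta)/16$; everything else is a line-for-line transcription of Section~\ref{sec:preprocessing}.

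However, your stated intensity $\lambda \sim (d\log d)/s_d(\theta)$ is wrong, and the slip reflects a misreading of how the $d\log d$ factor actually arises. With your $\lambda$ the conflict graph has maximum degree $\Delta \approx \lambda\, s_d(\theta) \approx d\log d$, so $\log\Delta \approx \log d$, and Theorem~\ref{thm:ind-graph} yields an independent set of size roughly $|X|\log\Delta/\Delta \approx \lambda \cdot \log d/(d\log d) = 1/s_d(\theta)$ --- the trivial bound. Indeed your own arithmetic (``retain a $(1-o(1))/(d\log d)$ fraction of the expected $\lambda$ points'') gives $1/s_d(\theta)$, not the $d\log d/(2s_d(\theta))$ you then assert. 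The paper instead takes $\lambda = (\sqrt{d}/(2\log d))^d$, i.e.\ $d^{\Theta(d)}$ points, so that $\Delta = \lambda\, s_d(\theta)$ is enormous and $\log\Delta = (1+o(1))\tfrac{d}{2}\log d$; the entire factor $\tfrac{d\log d}{2}$ comes from $\log\Delta$, not from any ``$\log$-gain of the nibble over greedy''. This is precisely the point emphasised in the introduction: the codegree-based Theorem~\ref{thm:ind-graph} is needed because the discretization is far too dense for Shearer's theorem, which would cap out at $\Delta \sim e^{O(d)}$ and hence $\log\Delta \sim d$. Once you correct $\lambda$, the rest of your plan goes through exactly as the paper does it.
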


We remark that the case of $\theta=\pi/3$ is of particular interest as it is equivalent to the well-known \emph{kissing number problem}, which is to determine the maximum number $K(d)$ of non-overlapping unit spheres that can touch a single unit sphere. Theorem~\ref{thm:sphere-codes} applied with $\theta=\pi/3$ implies that \[K(d)\geq (1-o(1))\sqrt{\frac{\pi}{8}}\left(\frac{2}{\sqrt{3}}\right)^{d-1} d^{3/2}\log d \, .\]

The proof of Theorem~\ref{thm:sphere-codes} is a simple adaptation of that of Theorem~\ref{thm:main} and so we defer the details to the appendix.

\subsection{Amorphous sphere packings in physics}

In physics, sphere packings serve as a model to investigate the phases of matter, where random sphere packings at a given density model the structure of matter at a given temperature. 
In dimension $3$, for instance, it is believed that random sphere packings transition from appearing ``gas-like'' at low density to ``lattice-like'' at high density, paralleling the phase transition between states of matter. However, rigorously demonstrating that this phase transition occurs remains a major open problem in the field (see \cite{lowen2000fun} and the references therein). 

Physicists have also devoted enormous effort to analysing sphere packings in high dimensions, with the aim of providing a more tractable analysis than in low dimensions, and in order to use the powerful machinery of equilibrium statistical physics to generate rich predictions~\cite{frisch1999high, frisch1985classical, frisch1987nonuniform, parisi2010mean, parisi2006amorphous, charbonneau2021three, torquato2006new, torquato2008}. 
Here, the important qualitative distinction is between sphere packings that are \emph{crystalline}, meaning that  they exhibit long-range ``correlations'', and \emph{amorphous}, meaning they don't have any such correlations.  For example, lattice packings are extreme instances of crystalline packings where the entire structure is determined by a basis.

In their seminal work on applying the replica method to the structure of high-dimensional sphere packings, Parisi and Zamponi~\cite{parisi2010mean,parisi2006amorphous} predicted that the largest density of amorphous packings in $d$ dimensions is $(1+o(1))(d\log d) 2^{-d}$, that is, a factor of $2$ larger than our lower bound from Theorem~\ref{thm:main}. While there is no agreed-upon rigorous definition of ``amorphous,'' it seems likely that any such definition would be satisfied by our construction for Theorem~\ref{thm:main}, which enjoys extremely fast decay of correlations.

\subsection{Methods and related work} 
Our proof can be described in broad steps as follows. First we discretize space via a Poisson point process at a carefully chosen intensity, followed by an alteration step where we impose additional uniformity properties on the discrete point set. Call the output of this process $X\subseteq \R^d$. We consider the graph $G(X,r)$  whose vertex set is $X$ and two distinct $x,y \in X$ are joined by an edge whenever $\|x-y\|\leq 2r$. An independent set in $G(X,r)$ is a packing of spheres of radius $r$ and so our goal becomes to find a large independent set in this graph. For this step we develop the following new graph theoretic tool that we believe is of independent interest.

 For a graph $G$ we let $\alpha(G)$ denote the size of the largest independent set in $G$. We let $\Delta(G)$ denote the maximum degree of $G$ and let $\Delta_2(G)$ denote the maximum codegree, that is, the maximum number of common  neighbours a pair of distinct vertices in $G$ can have. 
 

\begin{theorem}\label{thm:ind-graph}
Let $G$ be a graph on $n$ vertices, such that $\Delta(G)\leq \Delta$ and $\Delta_2(G)\leq C\Delta (\log \Delta)^{-c}$. Then 
\[\alpha(G)\geq (1-o(1))\frac{n\log \Delta }{\Delta }\,,\]
where $o(1)$ tends to $0$ as $\Delta\to \infty$ and we can take $C = 2^{-7}$ and $c = 7$.
\end{theorem}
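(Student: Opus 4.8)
The plan is to prove Theorem~\ref{thm:ind-graph} via the semi-random (R\"odl nibble) method, or equivalently by analysing a random greedy independent set, combined with a ``local occupancy''/entropy-style argument. The heart of the matter is that a graph with maximum degree $\Delta$ and \emph{small codegree} is locally almost a disjoint union of stars, so one expects to find independent sets of size close to the Shearer bound $(1-o(1))\frac{n\ln\Delta}{\Delta}$ that holds for triangle-free graphs; the hypothesis $\Delta_2(G)\le 2^{-7}\Delta(\log\Delta)^{-7}$ is exactly what is needed to make the ``almost triangle-free'' heuristic rigorous with only a $1-o(1)$ loss. Concretely, I would run the following process: fix a small parameter and repeatedly (i) activate each remaining vertex independently with a tiny probability $p$, (ii) keep a vertex in the independent set if none of its neighbours were activated, and (iii) delete the chosen vertices together with their neighbourhoods, then iterate on the surviving graph. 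One tracks the number of surviving vertices $n_i$ and controls the degree sequence of the surviving graph; the low-codegree hypothesis ensures that the events ``$u$ is activated'' and ``$v$ is activated'' interact weakly across the neighbourhood of a fixed vertex, so that a vertex survives each round with probability $\approx e^{-p\deg}$ and the expected number of selected vertices per round is $\approx \sum_v e^{-p\deg_v}$, which by convexity is at least $n e^{-p\Delta}$.

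The key steps, in order, are: (1) \emph{Set-up and parameters.} Choose $p = p(\Delta)$ so that after roughly $T \approx p^{-1}\ln\Delta$ rounds (equivalently, after the ``time'' integral of selection probabilities reaches $\ln\Delta$) the surviving graph is essentially empty; the natural choice makes the total number of selected vertices $\approx \int_0^{\ln \Delta} n e^{-t}\,dt \cdot \frac{1}{\Delta}$-type expression, giving the target $\frac{n\ln\Delta}{\Delta}$. (2) \emph{One-round analysis.} Show that in a single round, conditioned on the current graph $H$ with max degree $\le \Delta$, the expected number of vertices added to the independent set is $(1-o(1))\sum_{v\in V(H)} e^{-p\,\deg_H(v)}$, where the $o(1)$ error is governed by $p^2\Delta_2 \cdot \Delta$ — here is where $\Delta_2 \le C\Delta(\log\Delta)^{-c}$ enters to make this error negligible over all rounds. (3) \emph{Degree/codegree control.} Show that the surviving subgraph still has max degree $\le \Delta$ (trivially) and, crucially, that the codegree does not blow up relative to the decreasing vertex count; also show the degrees concentrate enough that the convexity bound $\sum e^{-p\deg_v}\ge n e^{-p\Delta}$ is not too lossy, or alternatively track the empirical degree distribution. (4) \emph{Concentration.} Use a bounded-differences / Azuma argument (each activated vertex changes the count by $O(\Delta)$, and one activates $O(np)$ vertices) to show the number of selected vertices in each round is concentrated around its mean; since there are only $O(p^{-1}\ln\Delta)$ rounds this is affordable. (5) \emph{Summation.} Add up the contributions over all rounds to get $\alpha(G)\ge(1-o(1))\frac{n\ln\Delta}{\Delta}$.

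I expect the main obstacle to be step (2)–(3): controlling how the codegree of the \emph{surviving} graph evolves, and showing that the one-round ``almost-independence'' estimate survives iteration. The subtlety is that conditioning on survival correlates the remaining vertices, and the error term in the one-round estimate scales like (number of triangles/cherries through a vertex) $\times p^2$, which is why one needs codegree smaller than $\Delta$ by a \emph{poly-logarithmic} factor rather than merely $o(\Delta)$: over $\Theta(p^{-1}\ln\Delta)$ rounds these errors accumulate, and a single $\log\Delta$ saving is not enough — hence the exponent $c=7$ (with room to spare) and the explicit constant $C=2^{-7}$. A secondary technical point is handling vertices of low degree in the surviving graph (which are selected with probability bounded away from $0$ and could otherwise distort the convexity estimate); this is dealt with by noting such vertices are removed quickly and contribute favourably to $\sum e^{-p\deg_v}$ anyway. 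Once the one-round estimate and the degree/codegree bookkeeping are in place, the concentration and summation steps are routine.
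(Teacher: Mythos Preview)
Your high-level plan---R\"odl nibble, track degrees and codegrees, concentrate, sum---matches the paper's, but you are missing the one device that makes the iteration produce a $\log\Delta$ factor rather than $O(1)$. To reach $(1-o(1))\frac{n\log\Delta}{\Delta}$ you need each of the $T\approx \gamma^{-1}\log\Delta$ rounds to contribute about $\gamma n_i/\Delta_i$ to the independent set with $n_i/\Delta_i\approx n/\Delta$ throughout; for this the \emph{maximum degree} $\Delta_i$ of the surviving graph must shrink by the same factor $1-\gamma$ as the vertex count. Your step (3) says only that $\Delta(G')\le\Delta$ ``trivially'', and the convexity bound $\sum_v e^{-p\deg_v}\ge n_i e^{-p\Delta}$ with $p$ tied to the \emph{original} $\Delta$ sums over all rounds to at most $n/\Delta$. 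The real work is to prove $d_{G'}(v)\le(1-\gamma+o(\gamma))\Delta_i$ for every surviving $v$, and here is the catch: the expected surviving degree of $v$ is $\sum_{w\in N(v)}(1-p)^{d(w)}$, so if some neighbours $w$ of $v$ have \emph{low} degree they survive with high probability and $d_{G'}(v)$ fails to shrink. ``Tracking the empirical degree distribution'' does not fix this; you would need to control the joint distribution of a vertex's degree and its neighbours' degrees, round after round.

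The paper's fix, which it explicitly flags as ``the extra twist'', is a \emph{regularisation step} (Lemma~\ref{lem:add_edges}): after each nibble it \emph{adds} edges greedily, between vertices at graph distance $\ge 4$ so that codegrees do not increase, to force every degree into $\{\Delta_i-1,\Delta_i\}$. Only with this near-regularity does the expected-degree calculation give $\E[d_{G'}(v)\mid v\in G']\le(1-\gamma+\gamma^2)\Delta_i$, and only then does the codegree hypothesis do its job---bounding the quadratic variation $\sum_t d(y_t)^2\le e(X,Y)+\Delta_2|X|^2$ in a Freedman-type martingale inequality (Lemma~\ref{lem:abstract-concentration-lemma})---to concentrate the surviving degrees and codegrees around their means. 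Your step (4) also aims at the wrong quantity: concentrating the number of selected vertices per round is easy; the delicate concentration is of the \emph{surviving degrees and codegrees}, and plain Azuma with Lipschitz constant $O(\Delta)$ is too coarse for that.
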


This theorem parallels a classical result of Shearer \cite{shearer1983note} who, building on an earlier breakthrough of Ajtai Koml\'os and Szemer\'edi \cite{ajtai1980note}, showed the same conclusion holds when $G$ is a \emph{triangle-free} graph of maximum degree $\Delta$. 
We remark that one can apply Shearer's theorem as a black box (along with a standard technique) to find an independent set of density $(\log \log \Delta)/\Delta$ in the context of Theorem~\ref{thm:ind-graph}. However, this bound is far too weak for our application. Heuristically speaking, the  discretization of space we construct is \emph{too dense} for an application of these classical results. Our new tool (Theorem~\ref{thm:ind-graph}) allows us to consider discretizations with $d^{\Omega(d)}$ points per unit volume, whereas an application of Shearer's theorem is optimal for discretizations with $e^{O(d)}$ points per unit.

We also note that one can see that Theorem~\ref{thm:ind-graph} is sharp, up to the constants $C,c$, by taking $\eta \approx (\log \Delta)^{-1}$ and considering a graph on $n$ vertices which is formed by taking the union of $n/(\eta \Delta)$ disjoint cliques of size $\eta \Delta $ and then adding (on top) a random regular graph of degree $(1-\eta)\Delta$. This graph has codegrees bounded by $\eta\Delta$ and $\alpha(G) \leq n/(\eta \Delta)$. 

While Theorem~\ref{thm:ind-graph} is novel, this is not the first instance of a Shearer-like theorem being used in the context of sphere packing. Krivelevich, Litsyn, and Vardy~\cite{krivelevich2004lower} used Shearer's theorem to give a packing of density $\Omega(d2^{-d})$, and the second author, Joos and Perkins~\cite{jenssen2019hard}, inspired by methods in the graph theoretic setting \cite{davies2015independent, davies2018average}, proved that a random sample from the ``hard-sphere model'' (at an appropriate temperature) gives packings of density $\Omega( d2^{-d})$ in expectation.

Interestingly, it is reasonable to suspect that Theorem~\ref{thm:ind-graph} can be improved by a factor of $2$. This would result in a factor $2$ improvement in our lower bound, which would match the prediction of Parisi and Zamponi mentioned above. This missing factor also connects with a major open problem in combinatorics, where the corresponding factor of $2$ is believed to hold in the setting of Shearer's theorem, when $G$ is triangle free. In this latter case, this ``missing'' factor is believed to be one of the two missing factors of $2$ that separate the upper and lower bounds on the off-diagonal Ramsey numbers $R(3,k)$, see \cite{BK-triangle-free,FGM-triangle-free}.

To prove Theorem~\ref{thm:ind-graph} we iteratively build up an independent set by sampling a random set of vertices at density $\g/\Delta$, for $\g \ll 1$, and then removing it and all of its neighbours from the graph. This technique has come to be known as the \emph{R\"{o}dl nibble} after the important work of R\"{o}dl \cite{rodl}, and it in fact goes back even further to the work of Ajtai, Koml\'os and Szemer\'edi \cite{ajtai1981dense}. The extra twist in our approach is that we additionally \emph{add} edges after each nibble step, to ensure that our graph remains approximately regular.

\subsection{Notation}
 We let $\|\cdot\|$ denote the Euclidean $2$-norm. For $x\in \R^d$ and $r>0$, we let $B_x(r)=\{y\in \R^d: \|x-y\|\leq r\}$ denote the closed ball centred at $x$ with radius $r$. For a measurable $S\subseteq \R^d$, we write $\Vol(S)$ for the usual Lebesgue measure of $S$. We write $\bX\sim \mathrm{Po}_\lambda(S)$ to denote that $\bX$ is a Poisson point process of intensity $\lambda$ on $S$. Given a set $V$ and $0\leq p\leq 1$, a $p$-random set $A$ is a random set sampled by including each element $v\in V$ independently with probability $p$.

Given a finite graph $G=(V,E)$, and $v\in V$, we let $N_G(v)=\{u\in V : \{u,v\}\in E\}$ denote the neighbourhood of $v$ in $G$. For a set $U\subseteq V$, we let $N_G(U)=\bigcup_{u\in U}N_G(u)$. For $u,v\in V$, we let $d_G(v)=|N_G(v)|$ denote the degree of $v$ and let $d_G(u,v)=|N_G(u)\cap N_G(v)|$ denote the codegree of $u$ and $v$. We write $\Delta(G)=\max_{v\in V}d_G(v)$ and $\Delta_2(G)=\max_{u,v\in V, u\neq v}d_{G}(u,v)$ for the maximum degree and codegree of $G$ respectively. Given $X,Y\subseteq V$ disjoint, let $E_G(X,Y)=\{\{x,y\}: x\in X, y\in Y \}$ and let $e_G(X,Y)=|E_G(X,Y)|$. We will often drop $G$ from the subscript in this notation if the graph $G$ is clear from the context. We will often write $v\in G$ to mean $v\in V$ and write $|G|$ to mean $|V|$. For any $A\subset V$ we define $G\setminus A$ to be the graph with vertex set $V\setminus A$ and edge set $E\cap \binom{V\setminus A}{2}$. For $e\in \binom{V}{2}$, we define $G+e$ to be the graph with vertex set $V$ and edge set $E\cup \{e\}$. For $u,v\in V$, we write $\dist_{G}(u,v)$ for the usual graph distance between $u,v$, that is, the length of the shortest path from $u$ to $v$ in $G$. We let $\alpha(G)$ denote the size of the largest independent set in $G$. 

\section{Proof of Theorem \ref{thm:main}}
Let us begin with a precise formulation of the problem at hand. Throughout, we let $r_d$ denote the radius of the ball of volume $1$ in $\R^d$. We call a collection of points $\mathcal P\subseteq \R^d$ a sphere packing if $\|x-y\|_2\geq 2r_d$ for all distinct pairs $x,y\in \mathcal{P}$. The sphere packing density $\theta(d)$ is then defined by
\begin{align*}
\theta(d) &= \sup_{\mathcal P} \limsup_{R \to \infty}  \frac{ |\mathcal P \cap B_0(R)| }{\Vol (B_0(R))} \, ,
\end{align*}
where the supremum is over the set of all possible packings $\cP$. It is well-known that the above supremum and limit supremum can be interchanged, where now the supremum is over all packings $\cP\subseteq B_{0}(R)$ (see, e.g., \cite[Section 1]{cohn2016packing} or \cite{groemer1963existenzsatze}). This formulation will be more convenient for our purposes.

Throughout this section $\Omega\subseteq \R^d$ will be a bounded and measurable set (which can be thought of as a large ball). 
We will apply Theorem \ref{thm:ind-graph} to a finite collection of ``candidate'' spheres in $\Omega$.  For a finite $X \subseteq \R^d$ and $r >0$ we define the graph $G(X,r)$ to be the graph on vertex set $X$ where two distinct $x,y \in X$ are connected whenever $\|x-y\|\leq 2r$.
Thus an independent set in $G(X,r)$ is a packing of spheres of radius $r$.  

In the following lemma, we show that there exists a finite set $X\subseteq \Omega$ such that $X$ is reasonably large and `uniform' in the sense that $G(X,r_d)$ has small maximum degree and codegree (allowing for an application of Theorem~\ref{thm:ind-graph}). We construct $X$ by sampling a Poisson process and then removing points that violate our uniformity conditions.  In Section \ref{sec:preprocessing} we will prove the following:

\begin{lemma}\label{prop:preprocessing}
Let $\Omega\subseteq \R^d$ be bounded and measurable.
    For all $d \geq 1000$, there exists $X \subset \Omega$ so  that 
\[ |X| \geq   \big(1-1/d\big)  \frac{\Delta}{2^d}\Vol(\Omega), \quad \text{ where } \quad \Delta = \bigg(\frac{\sqrt{d}}{4\log d}\bigg)^d, \]
and if $G = G(X,r_d)$ we have
\begin{equation} 
\label{it:prop-codegree}
    \Delta(G) \leq \Delta\big(1 + \Delta^{-1/3}  \big) \qquad  \text{ and } \qquad \Delta_2(G) \leq \Delta  \cdot e^{-(\log d)^2 /8}\, . \end{equation}
\end{lemma}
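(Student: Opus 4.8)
The plan is to obtain $X$ from a Poisson process by deleting a sparse set of ``bad'' vertices. Set the intensity to $\lambda := \Delta 2^{-d}$ and let $\mathbf X \sim \mathrm{Po}_\lambda(\Omega)$, so that $\E|\mathbf X| = \lambda\Vol(\Omega) = \tfrac{\Delta}{2^d}\Vol(\Omega)$. By the Mecke equation (Palm calculus for Poisson processes), conditioned on $x\in\mathbf X$ (resp. on $x,y\in\mathbf X$) the remaining points still form a $\mathrm{Po}_\lambda(\Omega)$ process, so the degree $d_{G(\mathbf X,r_d)}(x)$ has the law of $\mathrm{Po}(\mu_x)$ with $\mu_x = \lambda\Vol(B_x(2r_d)\cap\Omega) \le \lambda\Vol(B_0(2r_d)) = \lambda 2^d = \Delta$ (here we use $\Vol(B_0(r_d))=1$), and the codegree $d_{G(\mathbf X,r_d)}(x,y)$ has the law of $\mathrm{Po}(\lambda\Vol(B_x(2r_d)\cap B_y(2r_d)\cap\Omega))$. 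Call $x\in\mathbf X$ \emph{heavy} if $d_{G(\mathbf X,r_d)}(x) > \Delta(1+\Delta^{-1/3})$, and call a pair $\{x,y\}\subseteq\mathbf X$ \emph{bad} if $d_{G(\mathbf X,r_d)}(x,y) > \Delta e^{-(\log d)^2/8}$. Let $X$ be $\mathbf X$ with every heavy vertex and a fixed endpoint of every bad pair deleted. Since deleting vertices only decreases degrees and codegrees, $G(X,r_d)$ satisfies both inequalities of \eqref{it:prop-codegree} for \emph{every} realisation (no surviving vertex is heavy; no surviving pair is bad). It therefore suffices to show $\E[\#\{\text{deleted vertices}\}] \le \tfrac1d\lambda\Vol(\Omega)$, since then $\E|X| \ge (1-\tfrac1d)\tfrac{\Delta}{2^d}\Vol(\Omega)$ and, $|X|$ being integer valued, some realisation attains this bound while simultaneously satisfying \eqref{it:prop-codegree}.

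Because $\#\{\text{deleted}\} \le \#\{\text{heavy}\} + \#\{\text{bad pairs}\}$, I would bound these two expectations separately. For heavy vertices, the Mecke equation and monotonicity of the Poisson tail in the mean give $\E[\#\text{heavy}] \le \lambda\Vol(\Omega)\,\P[\mathrm{Po}(\Delta) \ge \Delta + \Delta^{2/3}]$, which a Chernoff bound for the Poisson distribution makes at most $\lambda\Vol(\Omega)\,e^{-\Delta^{1/3}/4}$ — superexponentially small in $d$ since $\Delta\ge 1$ for $d\ge 1000$. For bad pairs, the geometric input is that if $\|x-y\| = 2r_d\rho$ then $\Vol(B_x(2r_d)\cap B_y(2r_d)) = 2^d f_d(\rho)$, where $f_d(\rho):=\Vol(B_0(1)\cap B_{\rho e_1}(1))/\Vol(B_0(1))$ is nonincreasing on $[0,2]$ and, by the standard spherical-cap estimate, satisfies $f_d(\rho) \le \sqrt d\,(1-\rho^2/4)^{(d-1)/2}$. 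The two-point Mecke equation then yields
\[
\E[\#\text{bad pairs}] \;\le\; \tfrac12\,\lambda^2 \iint_{\Omega^2} \1\big[\|x-y\|\le 4r_d\big]\, \P\big[\mathrm{Po}(\Delta f_d(\|x-y\|/2r_d)) > \Delta e^{-(\log d)^2/8}\big]\, dx\, dy .
\]

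I would split this integral at the scale $\rho_0 := 2(\log d)/\sqrt d$ (half the typical nearest-neighbour scale $\Delta^{-1/d} = 4(\log d)/\sqrt d$). The crux is that the choice of $\Delta$ forces $f_d(\rho_0) \le \sqrt d\,(1-(\log d)^2/d)^{(d-1)/2} \le \sqrt d\, e^{-(\log d)^2/4} \le \tfrac12 e^{-(\log d)^2/8}$ for $d\ge 1000$. Hence for $\|x-y\|\ge 2r_d\rho_0$ the Poisson mean is at most half the threshold, so Chernoff bounds the inner probability by $e^{-\Delta e^{-(\log d)^2/8}/10}$; bounding the remaining integral by $\Vol(\Omega)\Vol(B_0(4r_d)) = 4^d\Vol(\Omega)$, this region contributes at most $\tfrac12\lambda\Vol(\Omega)\cdot\Delta 2^d e^{-\Delta e^{-(\log d)^2/8}/10}$, which is negligible since $\log(\Delta 2^d) = O(d\log d)$ while $\Delta e^{-(\log d)^2/8} = e^{(1-o(1))(d\log d)/2}$ is far larger. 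For $\|x-y\| < 2r_d\rho_0$ I bound the inner probability by $1$ and the integral by $\Vol(\Omega)\Vol(B_0(2r_d\rho_0)) = 2^d\rho_0^d\Vol(\Omega)$, so this region contributes $\tfrac12\lambda\Vol(\Omega)\cdot\Delta\rho_0^d = \tfrac12\lambda\Vol(\Omega)\,2^{-d}$, using $\Delta\rho_0^d = \big(\tfrac{\sqrt d}{4\log d}\cdot\tfrac{2\log d}{\sqrt d}\big)^d = 2^{-d}$. Combining the three estimates, $\E[\#\{\text{deleted}\}] \le \lambda\Vol(\Omega)\big(e^{-\Delta^{1/3}/4} + \tfrac12\Delta 2^d e^{-\Delta e^{-(\log d)^2/8}/10} + \tfrac12 2^{-d}\big)$, which is at most $\tfrac1d\lambda\Vol(\Omega)$ for every $d\ge 1000$, as required. (Note that intersecting with $\Omega$ only decreases the relevant volumes, so no regularity of $\Omega$ is needed.)

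The only genuinely delicate point is the calibration of the intensity, i.e.\ the appearance of $\Delta = (\sqrt d/(4\log d))^d$: it is tuned so that the expected degree is $\Delta$ while, at the typical nearest-neighbour separation $\|x-y\|\asymp (\log d/\sqrt d)\,r_d$, the lens volume — which is precisely the conditional expected codegree divided by $\Delta$ — has already collapsed to $e^{-\Theta((\log d)^2)}$, comfortably below the $(\log\Delta)^{-7}$ threshold demanded by Theorem~\ref{thm:ind-graph}, whereas pairs at smaller separation make up only an $e^{-\Omega(d)}$-fraction of all close pairs. Everything else — the Chernoff estimates and checking that the various superexponentially small terms lie below $1/d$ for all $d\ge 1000$ — is routine.
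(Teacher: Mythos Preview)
Your proof is correct and takes essentially the same approach as the paper: sample a Poisson process at intensity $\lambda=\Delta 2^{-d}$, delete heavy vertices and endpoints of bad-codegree pairs, and bound the expected number of deletions via the Mecke equation, Poisson tail bounds, and a lens-volume estimate, splitting the pair integral at the scale $\|x-y\|\asymp\log d$. The only cosmetic differences are that the paper counts bad \emph{vertices} (those $x$ for which some $y$ makes $(x,y)$ bad) rather than bad \emph{pairs}, and it parametrises the splitting threshold in absolute distance $\|x-y\|=\log d$ and the lens bound as $\Vol(B_x(2r_d)\cap B_y(2r_d))\le 2^d e^{-\|x-y\|^2/4}$ rather than via your relative parameter $\rho$; these are equivalent since $r_d\asymp\sqrt d$.
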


Note that $e^{-(\log d)^2} \approx (\log \Delta )^{-\log\log \Delta}$; thus the codegree condition at \eqref{it:prop-codegree} is appropriate for the application of our Theorem~\ref{thm:ind-graph}, with room to spare. Therefore we are able to deduce Theorem~\ref{thm:main} from Theorem~\ref{thm:ind-graph} and Lemma~\ref{prop:preprocessing}.

\begin{proof}[Proof of Theorem \ref{thm:main}]
Set $\Omega = B_0(R)$, the ball of radius $R$ centered at the origin. 
    It is sufficient to prove that for $R>0$ we can place $(1 - o(1)) \Vol(\Omega)\frac{ d \log d}{2^{d+1}}$ points in $\Omega$ that are each at pairwise distance at least $2r_d$.  

    Apply Lemma \ref{prop:preprocessing} to obtain $X \subset \Omega$ 
with $|X| \geq (1-o(1))\Vol(\Omega) \Delta2^{-d}$ and for which
\[ \Delta(G) \leq (1+o(1))\Delta \quad \text{ and } \quad \Delta_2(G)\leq \Delta  \cdot e^{-(\log d)^2 /8}\leq \Delta (\log \Delta )^{-\omega(1)}\, ,\]
where $G = G(X,r_d)$.  For $R>0$ we now apply Theorem~\ref{thm:ind-graph} to find an independent set $I \subseteq X$ for which 
\[ |I| \geq  (1-o(1))\frac{|X|\log \Delta }{\Delta} \geq (1-o(1)) \Vol(\Omega)\frac{ d\log d}{2^{d+1}}\, .\]
Now note that $I$ corresponds to a set of points in $\Omega$ of pairwise distance at least $2r_d$, completing the proof. 
\end{proof}

\subsection{Completing the discretization step: Proof of Lemma \ref{prop:preprocessing}} \label{sec:preprocessing}

Recall that we fixed $\Omega\subseteq \R^d$ bounded and measurable. 
We will prove Lemma~\ref{prop:preprocessing} by sampling a Poisson point process $\bX \subset \Omega$  with intensity $\lambda = 2^{-d}\Delta=\left(\frac{\sqrt{d}}{8\log d}\right)^d$. We then remove points $x \in \bX$ which satisfy
\begin{equation}\label{eq:bad-conditions} |\bX \cap B_x(2r_d)| \geq \Delta\big(1 + \Delta^{-1/3}\big) \quad \text{ or } \quad |\bX \cap B_x(2r_d) \cap B_y(2r_d)| \geq \Delta \cdot e^{- (\log d)^2 / 8}\, ,\end{equation}
for some $y \in \bX$. These two ``bad'' events correspond, respectively, to the degree of a vertex being too large in $G(X,r_d)$ and to $x$ being in a pair $x,y$ for which the codegree is large. 
We note that removing a point from $\bX$ \emph{does not} create another instance of these bad events and so we never create new bad events by deleting points.   Throughout this subsection we assume $d \geq 1000$.

To prove that we don't delete too many points, we make repeated use of the following property of Poisson point processes, known as the Mecke equation (see \cite{last2017lectures}). For a bounded and measurable set $\Lambda\subseteq \R^d$ and events
$(A_x)_{x \in \Lambda}$ we have 
\begin{equation}\label{eq:palm} \E \big| \big\{ x \in \bX \cap \Lambda : A_x \text{ holds for }\bX \big\} \big| = \lambda  \int_{\Lambda}  \P\big(A_x \text{ holds for }\bX \cup \{x\}\big)\, dx. \end{equation}
We will also use a basic bound on the tail of a Poisson random variable $Y$ (see e.g.\ \cite[Theorem 2.1]{janson2011random}): for all $t > 0$ we have \begin{equation} \label{eq:Poisson-tail}
    \P\big( Y - \E Y \geq t \cdot \E\, Y \big) \leq \exp\big(-\min\{ t, t^2\}  \cdot \E\, Y / 3\big) \,.
\end{equation}

The following lemma shows that only a small fraction of points in $\bX$ satisfy the first bad condition in \eqref{eq:bad-conditions}. The proof uses the concentration of a Poisson point process to say it approximates volumes well.

\begin{lemma}\label{lem:prec-bad-1} Let $\bX \sim \mathrm{Po}_\lambda\big(\Omega\big)$.  We have  
    $$\E\left|\big\{x \in \bX :  |\bX \cap B_x(2r_d)| \geq \Delta\big(1 + \Delta^{-1/3}\big) \big\}   \right| \leq (2d)^{-1} \cdot \E|\bX| \, .  $$
\end{lemma}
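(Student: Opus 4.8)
The plan is to apply the Mecke equation \eqref{eq:palm} with $\Lambda = \Omega$ and $A_x$ the event that $|\bX \cap B_x(2r_d)| \geq \Delta(1+\Delta^{-1/3})$, so that the left-hand side of the claimed inequality equals $\lambda \int_\Omega \P(A_x \text{ holds for } \bX \cup \{x\})\,dx$. First I would observe that for a fixed $x$, under $\bX \cup \{x\}$ the random variable $|\bX \cap B_x(2r_d)|$ is a Poisson random variable with mean $\lambda \Vol(B_x(2r_d)\cap \Omega) \leq \lambda \Vol(B_0(2r_d)) = \lambda \cdot 2^d = \Delta$, using that $r_d$ is the radius of the unit-volume ball so $\Vol(B_0(2r_d)) = 2^d$. (The ``$+1$'' from the added point at $x$ is harmless — since $x$ itself may or may not lie in $B_x(2r_d)$ depending on the convention $\|x-x\|=0\leq 2r_d$, one either absorbs it into the slack or notes it contributes at most $1 \ll \Delta^{2/3}$.) Thus $A_x$ is contained in the event that this Poisson variable exceeds its mean by a factor of at least $\Delta^{-1/3}$ (after adjusting for the additive $1$).

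Next I would apply the Poisson tail bound \eqref{eq:Poisson-tail} with $t = \Delta^{-1/3}$ (or a slightly smaller constant multiple thereof, to absorb the $+1$ and the fact that the mean may be strictly less than $\Delta$ — though a smaller mean only helps make the event less likely, one should be mildly careful since $t$ is defined relative to the true mean; it is cleanest to note the event $\{Y \geq \Delta(1+\Delta^{-1/3})\}$ where $Y$ is Poisson with mean $\mu \leq \Delta$ is contained in $\{Y - \mu \geq \Delta^{2/3} - 1\} \subseteq \{Y - \mu \geq \tfrac12\Delta^{2/3}\}$ for $d$ large, and if $\mu$ is much smaller than $\Delta$ this is even easier via a crude bound). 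Since $t = \Delta^{-1/3} < 1$, we are in the regime $\min\{t,t^2\} = t^2 = \Delta^{-2/3}$, so \eqref{eq:Poisson-tail} gives $\P(A_x) \leq \exp(-\Delta^{-2/3} \cdot \mu/3)$; in the main case $\mu \approx \Delta$ this is $\exp(-\Delta^{1/3}/3)$, which is superpolynomially small in $d$ and in particular is much less than $(2d)^{-1}$ once $d \geq 1000$. In the degenerate case where $\mu$ is small (because $B_x(2r_d)$ sticks out of $\Omega$), one uses instead the bound $\P(Y \geq \Delta(1+\Delta^{-1/3})) \leq \P(Y \geq \Delta) \leq \exp(-\Delta/3 \cdot \min\{1, (\Delta/\mu - 1)^2\})$ or simply that a Poisson variable with small mean is very unlikely to exceed $\Delta$; either way the bound $\P(A_x) \leq (2d)^{-1}$ holds uniformly in $x \in \Omega$.

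Finally I would integrate: $\lambda \int_\Omega \P(A_x \text{ for }\bX\cup\{x\})\,dx \leq (2d)^{-1} \lambda \Vol(\Omega) = (2d)^{-1}\E|\bX|$, since $\E|\bX| = \lambda\Vol(\Omega)$ for a Poisson process of intensity $\lambda$ on $\Omega$. This yields the lemma. I expect the main (though still routine) obstacle to be handling the boundary effect cleanly — that is, making sure the argument is uniform in $x\in\Omega$ even when $B_x(2r_d)$ is not entirely contained in $\Omega$, so that the mean $\mu = \lambda\Vol(B_x(2r_d)\cap\Omega)$ can be anywhere in $[0,\Delta]$; but as noted, a smaller mean only makes the overshoot event rarer, so this is dispatched by a short case split or by the monotone observation that $\P(\mathrm{Po}(\mu) \geq s)$ is increasing in $\mu$ and hence bounded by $\P(\mathrm{Po}(\Delta)\geq s)$. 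The only genuinely quantitative point to verify is that $\exp(-\Delta^{1/3}/3) \leq (2d)^{-1}$ for $d\geq 1000$, which is immediate since $\Delta = (\sqrt d/(4\log d))^d \to \infty$ enormously fast.
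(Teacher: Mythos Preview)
Your proposal is correct and follows essentially the same route as the paper: apply the Mecke equation, observe that $|\bX \cap B_x(2r_d)|$ is Poisson with mean at most $\lambda 2^d = \Delta$, and then use the Poisson tail bound \eqref{eq:Poisson-tail} with $t \approx \Delta^{-1/3}$ to get a bound of order $\exp(-c\,\Delta^{1/3}) \leq (2d)^{-1}$ before integrating. The paper handles the ``$+1$'' and the boundary effect exactly as you suggest (shifting $s$ by $1$ in the Mecke integral and implicitly using monotonicity of $\P(\mathrm{Po}(\mu)\ge s)$ in $\mu$), so your extra commentary on those points is already the intended resolution.
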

\begin{proof}
    Apply \eqref{eq:palm} to note that for each $s > 0$ we have  
    \begin{equation}\label{eq:palm1}
        \E\left|\left\{x \in \bX : |\bX \cap B_x(2r_d)|\geq s\right\} \right| = \lambda \int_\Omega \P\big(|\bX \cap B_x(2r_d)|\geq s - 1 \big)\,dx\, .
   \end{equation} 
   Now note for fixed $x \in \Omega$ we have that $|\bX \cap B_x(2r_d)|$ is a Poisson random variable of mean at most $\lambda 2^d  = \Delta$. Then apply \eqref{eq:Poisson-tail} with $t=\Delta^{-1/3}-\Delta^{-1}$ to see
    \begin{equation*} \P\left(|\bX \cap B_x(2r_d)| \geq \Delta+\Delta^{2/3} - 1\right) \leq \exp\left(- \Delta^{1/3}  / 4 \right)  \leq (2d)^{-1}
    \end{equation*} and use this in \eqref{eq:palm1} to complete the proof.
\end{proof}

To control the second bad event we need the following basic estimate
\begin{align}\label{eq:BasicEst}
\left(\frac{\pi e t^2}{d} \right)^{d/2}\leq \Vol(B_0(t)) = \frac{\pi^{d/2}}{\Gamma(\frac{d}{2} + 1)}t^d \leq \left(\frac{2\pi e t^2}{d} \right)^{d/2} 
\end{align}
where the bounds are via Stirling's formula for $d \geq 4$.  From the lower bound and the inequality $\pi e>8$ we also see that $r_d \leq \sqrt{d/8}$. We also need the following basic geometric fact to bound the volume of the interesection of two balls, which corresponds to the codegree. 

\begin{fact}\label{fact:sphere-intersection} For $t \geq 0$, let $x,y \in \R^d$ satisfy $\|x - y\| \geq t$. Then $$\Vol(B_x(2r_d) \cap B_y(2r_d)) \leq 2^d e^{-t^2/4}\,.$$
\end{fact}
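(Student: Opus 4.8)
The goal is to bound $\Vol(B_x(2r_d) \cap B_y(2r_d))$ when $\|x-y\| \geq t$. The plan is to observe that the intersection of the two balls is contained in a ball of smaller radius, and then apply the volume estimate \eqref{eq:BasicEst}.

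\medskip

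\textbf{Proof proposal.} First I would reduce to a one-dimensional computation about the geometry of two intersecting balls of equal radius $R := 2r_d$ whose centres are at distance exactly $t$ (the case $\|x-y\| > t$ only makes the intersection smaller, so it suffices to treat $\|x-y\| = t$). Place the centres at $\pm (t/2) e_1$. Any point $z$ in the intersection satisfies both $\|z - (t/2)e_1\| \le R$ and $\|z + (t/2)e_1\| \le R$; adding the squares of these two inequalities and using the parallelogram law gives $2\|z\|^2 + t^2/2 \le 2R^2$, i.e. $\|z\| \le \sqrt{R^2 - t^2/4}$. Hence the intersection is contained in the ball $B_0\big(\sqrt{R^2 - t^2/4}\big)$, so
\[
\Vol(B_x(2r_d)\cap B_y(2r_d)) \le \Vol\!\left(B_0\!\left(\sqrt{4r_d^2 - t^2/4}\right)\right).
\]

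\medskip

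Next I would plug this into the volume formula. By \eqref{eq:BasicEst} and the fact that $\Vol(B_0(2r_d)) = 2^d \Vol(B_0(r_d)) = 2^d$, we have $\Vol(B_0(\rho)) = 2^d (\rho/(2r_d))^d$ for any $\rho \ge 0$. Applying this with $\rho = \sqrt{4r_d^2 - t^2/4}$ gives
\[
\Vol\!\left(B_0\!\left(\sqrt{4r_d^2 - t^2/4}\right)\right) = 2^d \left(1 - \frac{t^2}{16 r_d^2}\right)^{d/2}.
\]
(If $t^2 \ge 16 r_d^2$ the intersection is empty and the bound is trivial, so assume $t < 4r_d$.) Finally, using $1 - u \le e^{-u}$ with $u = t^2/(16 r_d^2)$, this is at most $2^d \exp\big(- d t^2 / (32 r_d^2)\big)$. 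Since $r_d \le \sqrt{d/8}$ from the lower bound in \eqref{eq:BasicEst}, we have $32 r_d^2 \le 4d$, so $dt^2/(32 r_d^2) \ge t^2/4$, giving $\Vol(B_x(2r_d)\cap B_y(2r_d)) \le 2^d e^{-t^2/4}$, as required.

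\medskip

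\textbf{Main obstacle.} There is no real obstacle here — every step is elementary. The only point requiring a little care is matching the constant: one must use the dimension-dependent bound $r_d \le \sqrt{d/8}$ (rather than a crude estimate) to convert the sharp exponent $-dt^2/(32 r_d^2)$ into the clean, dimension-free exponent $-t^2/4$ claimed in the statement. One should also remember to dispose of the degenerate case $t \ge 4 r_d$ (equivalently $t^2 \ge 16 r_d^2$), where the two balls are disjoint and the inequality holds vacuously.
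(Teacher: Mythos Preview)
Your proof is correct and follows essentially the same approach as the paper: the paper also shows the containment $B_x(2r_d)\cap B_y(2r_d)\subset B_{(x+y)/2}\big((4r_d^2-t^2/4)^{1/2}\big)$ via the parallelogram identity, then applies the same volume computation and the bound $r_d\le\sqrt{d/8}$ to reach the exponent $-t^2/4$. Your explicit handling of the degenerate case $t\ge 4r_d$ is a nice addition that the paper leaves implicit.
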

\begin{proof}
We observe that  
\begin{equation}\label{eq:Ball-inequal} B_x(2r_d) \cap B_y(2r_d) \subset B_{(x+y)/2}\big( (4r_d^2 - t^2/4)^{1/2}\big )\,,\end{equation}
since for all $x,y,z$ we have 
\[ 4\|z - (x+y)/2\|^2 = 2\|z - x\|^2 + 2\|z - y\|^2 - \|x - y\|^2 \,,  \] by expanding the norms as inner products.
Thus, using \eqref{eq:Ball-inequal}, gives
$$\Vol(B_x(2r_d) \cap B_y(2r_d)) \leq \Vol\left(B_{0}\left(\sqrt{4r_d^2 - t^2/4}\right)\right) = 2^d \left(1 - \frac{t^2}{16 r_d^2}\right)^{d/2} \leq 2^d\exp\left(-  \frac{t^2 d}{32 r_d^2}\right)\,.$$
Using the fact that $r_d \leq \sqrt{d/8}$ completes the proof.
\end{proof}
 
We are now ready to bound the expected number of vertices which have a bad codegree in our graph. The proof is similar to the proof of Lemma~\ref{lem:prec-bad-1}, using the fact that the Poisson point process approximates volumes well.
\begin{lemma}\label{lem:prec-bad-2} 
    Let $\bX \sim \mathrm{Po}_\lambda\big( \Omega \big)$ and put $\eta = e^{-(\log d)^2/8}$. We have  
    $$\E\, \big|\big\{x \in \bX : |\bX \cap B_x(2r_d) \cap B_y(2r_d)| \geq  \eta \Delta  \text{ for some }y\in \bX\big\}   \big| \leq (2d)^{-1} \cdot \E |\bX | \,.  $$
\end{lemma}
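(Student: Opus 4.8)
The plan is to mirror the proof of Lemma~\ref{lem:prec-bad-1}, but now applying the Mecke equation twice to handle the pair $(x,y)$. First I would replace the ``for some $y$'' quantifier by a union bound that is itself controlled by an expectation. Precisely, the number of $x\in\bX$ for which there exists $y\in\bX$ with $|\bX\cap B_x(2r_d)\cap B_y(2r_d)|\ge \eta\Delta$ is at most the number of \emph{ordered pairs} $(x,y)\in\bX^2$ with this codegree property. So it suffices to bound
\[
\E\,\big|\big\{(x,y)\in\bX^2 : x\ne y,\ |\bX\cap B_x(2r_d)\cap B_y(2r_d)|\ge \eta\Delta\big\}\big|\le (2d)^{-1}\E|\bX|\,.
\]
Applying the two-point Mecke equation, this expectation equals
\[
\lambda^2 \int_\Omega\int_\Omega \P\big(|\bX\cap B_x(2r_d)\cap B_y(2r_d)|\ge \eta\Delta - 2\big)\,dx\,dy\,,
\]
where the $-2$ accounts for possibly adding both $x$ and $y$ to $\bX$.

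The next step is to split the inner integral according to the distance $\|x-y\|$, using Fact~\ref{fact:sphere-intersection} to see that the relevant Poisson mean is small once $x,y$ are far apart. Set $t_0 = \sqrt{32 r_d^2 (\log d)^2/(16 d)}$, or more simply choose $t_0$ so that $2^d e^{-t_0^2/4}\le \tfrac12\eta\Delta = \tfrac12 e^{-(\log d)^2/8}2^d$; since $r_d\le\sqrt{d/8}$ one checks $t_0 = \Theta(\log d)$ works, in particular $t_0 = o(\sqrt d)$. When $\|x-y\|\ge t_0$, the random variable $|\bX\cap B_x(2r_d)\cap B_y(2r_d)|$ is Poisson with mean $\lambda\cdot\Vol(B_x(2r_d)\cap B_y(2r_d))\le \lambda 2^d e^{-t_0^2/4}\le \tfrac12\eta\Delta$, and the Poisson tail bound~\eqref{eq:Poisson-tail} with $t=1$ gives probability at most $\exp(-\eta\Delta/12)$, which is doubly-exponentially small in $d$ and easily absorbs the volume factor $\Vol(\Omega)^2 \lambda^2$ after one notes $\eta\Delta = (\sqrt d/(4\log d))^d e^{-(\log d)^2/8}$ is still $d^{\Omega(d)}$. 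When $\|x-y\|< t_0$, we simply bound the probability by $1$, so this part contributes at most $\lambda^2 \Vol(\Omega)\cdot\Vol(B_0(t_0))$; using~\eqref{eq:BasicEst}, $\Vol(B_0(t_0))\le (2\pi e t_0^2/d)^{d/2} = d^{-\Omega(d)}$ since $t_0 = o(\sqrt d)$, and this beats the $\lambda^2 = d^{O(d)}$ prefactor by a wide margin, so this term is at most $(4d)^{-1}\lambda\Vol(\Omega) = (4d)^{-1}\E|\bX|$ for $d$ large.

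The main obstacle — or rather the only point requiring care — is the bookkeeping of the competing exponential-in-$d$ factors: $\lambda^2 = (\sqrt d/(8\log d))^{2d}$ on one side against $\Vol(B_0(t_0)) = d^{-\Theta(d)}$ and $e^{-\eta\Delta/12}$ on the other. The key quantitative inputs are that $t_0 = \Theta(\log d) = o(\sqrt d)$ (so the near-diagonal volume wins) and that $\eta\Delta\to\infty$ super-polynomially (so the far-diagonal Poisson tail wins); both follow from $\Delta = (\sqrt d/(4\log d))^d$ and $\eta = e^{-(\log d)^2/8}$ by a direct computation. Assembling the two ranges and using $\E|\bX| = \lambda\Vol(\Omega)$ completes the proof, with plenty of room to spare.
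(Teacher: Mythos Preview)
Your approach is essentially the paper's: Mecke equation, then split the $y$-integral at a threshold $t_0=\Theta(\log d)$, handling the near regime by the tiny volume of $B_0(t_0)$ and the far regime by the Poisson tail combined with Fact~\ref{fact:sphere-intersection}. The paper applies Mecke sequentially (first in $x$, then in $y$) rather than via the two-point formula, and uses the explicit threshold $t_0=\log d$, but these are cosmetic differences.

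There is, however, one step that fails as written. In the far-diagonal regime you assert that $\exp(-\eta\Delta/12)$ ``easily absorbs the volume factor $\Vol(\Omega)^2\lambda^2$''. It does not: $\Omega$ is an \emph{arbitrary} bounded measurable set (in the application, a ball of radius $R\to\infty$), so no function of $d$ alone can dominate $\Vol(\Omega)^2$. The target bound $(2d)^{-1}\E|\bX|=(2d)^{-1}\lambda\Vol(\Omega)$ is linear in $\Vol(\Omega)$, and your far-diagonal term as stated is quadratic. The fix is immediate and is exactly what the paper does: observe that $B_x(2r_d)\cap B_y(2r_d)=\emptyset$ whenever $\|x-y\|>4r_d$, so the inner integral in $y$ is really over $B_x(4r_d)$, contributing $\Vol(B_0(4r_d))=4^d$ rather than $\Vol(\Omega)$. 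With this restriction the far-diagonal term is at most $\lambda\Vol(\Omega)\cdot \lambda\, 4^d\cdot e^{-\eta\Delta/12}$, and since $\eta\Delta=d^{(1/2-o(1))d}$ the exponential kills $\lambda\,4^d = 2^d\Delta$ with vast room to spare. (A minor related point: your displayed inequality ``$2^d e^{-t_0^2/4}\le \tfrac12 e^{-(\log d)^2/8}2^d$'' seems to conflate $\Delta$ with $2^d$; the correct condition on the Poisson mean is $\Delta e^{-t_0^2/4}\le \tfrac12\eta\Delta$, i.e.\ $e^{-t_0^2/4}\le \tfrac12\eta$, which indeed gives $t_0=\Theta(\log d)$.)
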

\begin{proof}
Define $I_{x,y} = |\bX \cap B_x(2r_d) \cap B_y(2r_d)|$ and apply \eqref{eq:palm}, as we did in Lemma~\ref{lem:prec-bad-1}, to see it is enough to show for every $x\in \Omega$
    \[ \P\big(\exists \hspace{.5mm} y \in \bX : I_{x,y} \geq \eta \Delta -1\big) \leq (2d)^{-1}\,. \] For this, we apply Markov's inequality twice to bound 
 \begin{equation}\label{eq:bad-2-split}\P\big(\exists \hspace{.5mm} y \in \bX : I_{x,y} \geq\eta \Delta -1\big) \leq  \E\, |B_x(\log d) \cap \bX|  + \E\, \big|\big\{ y \in \bX \setminus B_x(\log d) :I_{x,y} \geq \eta \Delta-1 \big\}\big| \end{equation}
    and we deal with these two terms on the right-hand-side separately. The first term is
\[    \lambda \cdot \Vol(B_x(\log d)) 
        \leq \bigg(\frac{\sqrt{d}}{8\log d}\bigg)^d  \bigg(\frac{2 \pi e (\log d)^2}{d}\bigg)^{d/2} = \bigg(\frac{\sqrt{2\pi e}}{8} \bigg)^d \leq  (4d)^{-1} \, ,\]
 where for the first inequality we used~\eqref{eq:BasicEst} and recalled that $\lambda =\left(\frac{\sqrt{d}}{8\log d}\right)^d$.
For the second term in 
 \eqref{eq:bad-2-split}, we proceed as in the proof of Lemma~\ref{lem:prec-bad-1}. First, note we only need to consider $y \in B_x(4r_d)$, otherwise $I_{x,y} = 0$, trivially. We use the Mecke equation \eqref{eq:palm} (again) to express the second term in \eqref{eq:bad-2-split} as
 \[ \lambda \cdot \int \P\big( I_{x,y} \geq \eta \Delta-2 \big)\, dy   \leq   ( \E\,|\bX \cap B_x(4r_d)|\, ) \cdot \max_{y}\, \P\big(  I_{x,y} \geq \eta\Delta-2 \big),  \] 
 where both the integral and maximum are over $y\in B_x(4r_d)  \setminus B_x(\log d)$. Now note that 
 \[ \E\, I_{x,y} \leq \lambda \Vol(B_x(2r_d) \cap B_y(2r_d)) \leq \lambda \cdot 2^d e^{-(\log d)^2/4} \leq   \Delta \cdot e^{-(\log d)^2/4} ,\]   by Fact~\ref{fact:sphere-intersection}. Since $I_{x,y}$ is Poisson, we apply \eqref{eq:Poisson-tail} to see $$( \E\, |\bX \cap B_x(4r_d) |  )\cdot \P\left( I_{x,y} \geq \eta \Delta - 2\right) \leq 2^d \Delta \cdot\exp\left(- \Delta e^{-(\log d)^2 /16 } \right) \leq (4d)^{-1},$$ as desired.
\end{proof}

Now we may combine Lemma~\ref{lem:prec-bad-1} and Lemma~\ref{lem:prec-bad-2} to prove Lemma~\ref{prop:preprocessing}.
\begin{proof}[Proof of Lemma~\ref{prop:preprocessing}]
    Let $\bX$ be the Poisson point process in $\Omega$ of intensity $\lambda$. Let $S_1 \subset \bX$ and $S_2 \subset \bX$, be the points that satisfy the first and second properties in \eqref{eq:bad-conditions}, respectively. We set $X =  \bX \setminus (S_1 \cup S_2)$ and apply Lemma~\ref{lem:prec-bad-1} and Lemma~\ref{lem:prec-bad-2} to see  \[ \E|X| \geq \E |\bX| - \E |S_1| - \E |S_2| \geq (1-1/d) \cdot \E |\bX |. \]
 Noting that $\E|\bX| = \Vol(\Omega)(\sqrt{d}/(8\log d))^d = \Vol(\Omega) \Delta2^{-d}$, finishes the proof.
\end{proof}

\section{Controlling degrees and codegrees}
We now turn our attention toward the proof of Theorem~\ref{thm:ind-graph}. As noted in the introduction our strategy will be to iteratively build an independent set by sampling a random set of vertices at density $\g/\Delta$ and then removing it, and all of its neighbours from the graph. We refer to one step of this iteration as a ``nibble''.

The main goal of this section is to show that the degrees and codegrees in our graph decrease appropriately in each nibble step thereby setting us up for further steps in our iteration.   Throughout this section, we will assume that $G$ is a graph satisfying \begin{equation} \label{eq:G-assumptions}
    d_G(v) \in \{  \Delta-1 , \Delta\} \hspace{3mm} \text{ for all } v \in V(G) \hspace{3mm} \text{ and } \hspace{5mm}  \Delta_2(G) \leq \eta \Delta
\end{equation}
and our parameters $\gamma,\eta,\Delta$ satisfy 
\begin{equation}\label{eq:param-assumptions}
    \Delta \geq 1, \quad \gamma \leq 1/2, \quad \Delta^{-1/2}\leq \eta \leq \gamma^2/8\,.
\end{equation}

\begin{lemma}\label{lem:degs}
Let $\Delta,\gamma, \eta$ satisfy \eqref{eq:param-assumptions}, let $\alpha \in [2\gamma^2,\gamma]$ and let $G$ satisfy \eqref{eq:G-assumptions}.  
Let $A \subseteq V(G)$ be a $p$-random set, where $p = \g/\Delta$, and let $G'=G\setminus (A\cup N_G(A))$.  
Then for all vertices $v \in G$ we have
\begin{equation}\label{eq:concentration-degs}
\P\big(d_{G'}(v)\geq\left(1-\gamma+\alpha\right)d_G(v)\, \big\vert \, v \in G'\big)\leq  \exp\left(-\frac{\alpha^2}{32\gamma \eta}\right) ,
\end{equation} and for all distinct $u,v \in G$, we have 
\begin{equation}\label{eq:concentration-codegs}
\P\big( d_{G'}(u,v)\geq\left(1-\gamma+\alpha\right)\eta \Delta\, \big\vert\, u,v \in G' \big)\leq  \exp\left(-\frac{\alpha^2}{32\gamma \eta}\right)\, .
\end{equation}
\end{lemma}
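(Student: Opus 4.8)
The plan is to analyze each of $d_{G'}(v)$ and $d_{G'}(u,v)$ conditionally on the relevant vertices surviving, and to show that the conditional expectation is at most $(1-\gamma)d_G(v)$ (respectively $(1-\gamma)\eta\Delta$), then apply a concentration inequality to beat the slack $\alpha d_G(v)$ (resp. $\alpha\eta\Delta$). First I would fix $v$ and condition on $v\in G'$; this is exactly the event that $v\notin A$ and no neighbour of $v$ lies in $A$. Under this conditioning, each $w\in N_G(v)$ survives into $G'$ iff $w\notin A$ and no neighbour of $w$ lies in $A$; since $v\in A$ is excluded, the relevant randomness is the $p$-random choice on $V(G)\setminus(\{v\}\cup N_G(v))$ together with the (already-determined) fact that $N_G(v)\cap A=\varnothing$. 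For a fixed surviving-candidate $w\in N_G(v)$, the probability it is removed is governed by $|N_G(w)\setminus(\{v\}\cup N_G(v))|$, which is $d_G(w)-1-d_G(v,w)\ge d_G(w)-1-\eta\Delta$. Thus $\P(w\in G'\mid v\in G')\le (1-p)^{d_G(w)-1-\eta\Delta}$, and since $d_G(w)\ge\Delta-1$ and $p=\gamma/\Delta$, a short computation with $1-p\le e^{-p}$ and the bound $\eta\le\gamma^2/8$, $\gamma\le 1/2$ gives $(1-p)^{d_G(w)-1-\eta\Delta}\le e^{-\gamma(1-o(1))}\le 1-\gamma+\gamma^2\le 1-\gamma+\alpha$ after absorbing lower-order terms using $\alpha\ge 2\gamma^2$. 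Summing over the $d_G(v)$ neighbours gives $\E[d_{G'}(v)\mid v\in G']\le (1-\gamma+\alpha/2)d_G(v)$, say, leaving room $\alpha d_G(v)/2$ for the deviation.

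The codegree case is analogous: condition on $u,v\in G'$, i.e. $u,v\notin A$ and $N_G(u)\cup N_G(v)$ disjoint from $A$. A common neighbour $w\in N_G(u)\cap N_G(v)$ survives iff none of its remaining neighbours $N_G(w)\setminus(\{u,v\}\cup N_G(u)\cup N_G(v))$ is in $A$; as before the survival probability is at most $(1-p)^{d_G(w)-2-2\eta\Delta}\le 1-\gamma+\alpha/2$, and there are at most $\eta\Delta$ such $w$, so $\E[d_{G'}(u,v)\mid u,v\in G']\le (1-\gamma+\alpha/2)\eta\Delta$.

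For concentration, the key point is that conditionally on $v\in G'$ (resp. $u,v\in G'$) the quantity $d_{G'}(v)=\sum_{w\in N_G(v)}\1[w\in G']$ is a function of the independent indicators $\1[z\in A]$ for $z$ ranging over vertices \emph{not} in $\{v\}\cup N_G(v)$ (these conditioned to be absent play no role). Changing one such $\1[z\in A]$ can change at most $d_G(z)\le\Delta$ of the summands, but more usefully each summand $\1[w\in G']$ depends only on whether $A$ hits $N_G(w)$, so I would set this up as a read-$k$ / certificate-based bounded-differences argument, or better, directly as a martingale where one exposes the indicators $\1[z\in A]$: the relevant variance-type parameter is controlled because the number of pairs $(z,w)$ with $z\in N_G(w)$, $w\in N_G(v)$ is at most $d_G(v)\cdot\Delta$, while the probability $z\in A$ is $p=\gamma/\Delta$, giving an effective "expected number of influential coordinates" of order $\gamma d_G(v)$; together with the codegree bound $\eta\Delta$ limiting how many summands a single $z$ affects (since $z$'s neighbourhood meets $N_G(v)$ in at most... ) — here one should use that $|N_G(z)\cap N_G(v)|\le\eta\Delta$ — a Bernstein/Freedman-type bound yields a tail of order $\exp(-c\,\alpha^2 d_G(v)^2/(\gamma d_G(v)\cdot\eta\Delta))=\exp(-c\,\alpha^2/(\gamma\eta))$, matching the target $\exp(-\alpha^2/(32\gamma\eta))$ after tracking constants. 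The codegree computation is the same with $d_G(v)$ replaced by $\eta\Delta$, which is precisely why the exponent comes out the same.

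I expect the main obstacle to be the concentration step rather than the expectation step. The subtlety is that $d_{G'}(v)$ is not a sum of independent variables and not a nicely Lipschitz function of the $\1[z\in A]$'s in the worst case (a single high-degree $z$ could in principle kill many neighbours of $v$); the saving grace is the codegree hypothesis $\Delta_2(G)\le\eta\Delta$, which bounds $|N_G(z)\cap N_G(v)|\le\eta\Delta$, so each coordinate has \emph{bounded} influence $\le\eta\Delta$ on $d_{G'}(v)$, and the \emph{number} of coordinates with nonzero influence is concentrated around $\sum_{w\in N_G(v)}|N_G(w)|\cdot p\approx\gamma d_G(v)$. Feeding "influence $\le\eta\Delta$" and "expected total influence $\lesssim\gamma d_G(v)\cdot$(something)" into a Bernstein-type inequality for sums of this form — or invoking a suitable concentration inequality for functions of independent Bernoullis with bounded influences, e.g. a bounded-differences inequality with the variance-aware refinement — is the delicate part, and getting the constant down to $1/32$ will require being slightly careful but is routine. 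A clean way to package this: write $d_{G'}(v)=d_G(v)-\sum_{w}\1[A\cap N_G(w)\neq\varnothing,\ w\text{ survives-candidate}]$ and note $\sum_w\1[A\cap N_G(w)\neq\varnothing]\le\sum_{z\in A}|N_G(z)\cap N_G(v)|\le\eta\Delta\cdot|A\cap N_G(N_G(v))|$, reducing everything to a tail bound on the binomial-like quantity $|A\cap(\text{a set of size}\le d_G(v)\Delta)|$, whose mean is $\le\gamma d_G(v)$; a Chernoff bound then gives the claimed exponential with the $32$ absorbed into the constants via \eqref{eq:param-assumptions}.
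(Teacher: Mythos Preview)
Your overall plan is the paper's proof. The expectation step is Lemma~\ref{lem:E-deg} there, and your martingale/Freedman sketch for concentration is exactly what the paper does (packaged as the auxiliary Lemma~\ref{lem:abstract-concentration-lemma}): expose the indicators $\1[z\in A]$ for $z$ outside $N_G(v)\cup\{v\}$, use that flipping $\1[z\in A]$ changes $d_{G'}(v)$ by at most $|N_G(z)\cap N_G(v)|\le\eta\Delta$, control $\sum_z |N_G(z)\cap N_G(v)|^2\le e_G(X,Y)+\eta\Delta\,|X|^2$ with $X=N_G(v)$, and apply a one-sided Freedman/Chung--Lu inequality. The codegree case is the same with $X=N_G(u)\cap N_G(v)$. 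So your first concentration argument is correct and matches the paper.

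Your final paragraph's ``clean way,'' however, does not work. You bound
\[
\sum_{w\in N_G(v)}\1\big[A\cap N_G(w)\neq\varnothing\big]\ \le\ \sum_{z\in A}|N_G(z)\cap N_G(v)|\ \le\ \eta\Delta\cdot\big|A\cap N_G(N_G(v))\big|
\]
and then propose a Chernoff bound on the binomial on the right. But the event you need to rule out is $d_{G'}(v)\ge(1-\gamma+\alpha)d_G(v)$, i.e.\ \emph{too few} neighbours of $v$ removed; for that you need a \emph{lower} bound on the left-hand side, and both displayed inequalities go the wrong way. Concentration of $|A\cap N_G(N_G(v))|$ near its mean is perfectly compatible with the removals being highly overlapping and the left-hand side being small. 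There is no shortcut here: the codegree-controlled martingale (your first suggestion) is the argument, so drop the last reduction.
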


To prove \eqref{eq:concentration-degs} in Lemma~\ref{lem:degs} we first calculate the expected value $\E( d_{G'}(v)\, \vert\, v \in G')$ and then prove concentration by a standard martingale idea: we expose vertices of $A \cap N(N(v))$,
one-by-one, and track how $|N(v) \setminus A|$ decreases. The key here is that each exposure can only remove so much from $N(v)$ by the codegree condition. This is formally captured by bounding the increments in the ``exposure'' martingale and using an appropriate martingale concentration inequality. The proof of \eqref{eq:concentration-codegs} will follow from a very similar argument.

\subsection{Expected degrees and codegrees}  In this section we calculate the expected degrees and codegrees of the surviving vertices. 

\begin{lemma}\label{lem:E-deg}  Let $\Delta,\gamma,\eta$ satisfy \eqref{eq:param-assumptions} and let $G$ be a graph satisfying \eqref{eq:G-assumptions}.  
Let $A \subseteq V(G)$ be $p$-random, where $p = \g/\Delta$, and let $G'=G\setminus (A\cup N_G(A))$.  Then, for all $u, v \in G$, we have
\[ \E\big( d_{G'}(v) \hspace{.4mm} \vert\, v \in G' \big) \leq \left(1-\gamma+\gamma^2\right)d_G(v)\, , \]
and 
\[\E \big( d_{G'}(u,v)\,|\,u,v\in G' \big) \leq \left(1 - \gamma + \gamma^2\right) d_G(u,v) \, . \] 
\end{lemma}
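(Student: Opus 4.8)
The plan is to compute both conditional expectations directly by conditioning on which vertices of $A$ fall in the relevant ``local'' neighbourhoods, since $A$ is a product measure and the events $\{v \in G'\}$, $\{u,v \in G'\}$ depend only on $A$ restricted to $\{v\} \cup N_G(v)$, respectively $\{u,v\} \cup N_G(u) \cup N_G(v)$. First I would handle the degree bound. Fix $v$; a neighbour $w \in N_G(v)$ survives into $G'$ iff $w \notin A$ and $N_G(w) \cap A = \emptyset$, and we are conditioning on $v \notin A$, $N_G(v) \cap A = \emptyset$. So
\[
\E\big(d_{G'}(v) \mid v \in G'\big) = \sum_{w \in N_G(v)} \P\big(w \in G' \mid v \in G'\big).
\]
For a fixed $w \in N_G(v)$, the event $\{v \in G'\}$ forces $A$ to avoid $\{v\} \cup N_G(v)$, and $\{w \in G'\}$ additionally forces $A$ to avoid $N_G(w) \setminus (\{v\} \cup N_G(v))$; since $A$ is $p$-random these are independent coordinate constraints, so
\[
\P\big(w \in G' \mid v \in G'\big) = (1-p)^{|N_G(w) \setminus (\{v\} \cup N_G(v))|} \le (1-p)^{d_G(w) - d_G(v,w) - 1},
\]
using $v \in N_G(w)$ and $|N_G(w) \cap N_G(v)| = d_G(v,w)$. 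Now $d_G(w) \ge \Delta - 1$ by \eqref{eq:G-assumptions} and $d_G(v,w) \le \eta\Delta$, so the exponent is at least $\Delta - 2 - \eta\Delta$, giving $\P(w \in G' \mid v \in G') \le (1-p)^{\Delta(1-\eta) - 2}$. Since $p = \gamma/\Delta$ and $\gamma \le 1/2$, one checks $(1-p)^{\Delta(1-\eta)-2} \le e^{-p(\Delta(1-\eta)-2)} = e^{-\gamma(1-\eta) + 2\gamma/\Delta} \le 1 - \gamma + \gamma^2$ using $\eta \le \gamma^2/8$, $\gamma \le 1/2$, $\Delta \ge 1$ and the inequality $e^{-x} \le 1 - x + x^2/2$ for $x \ge 0$ together with a little slack; summing over the $d_G(v)$ choices of $w$ gives the claim.

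For the codegree bound the argument is the same with one extra wrinkle: a common neighbour $w \in N_G(u) \cap N_G(v)$ survives iff $w \notin A$ and $N_G(w) \cap A = \emptyset$, conditioned on $A$ avoiding $\{u,v\} \cup N_G(u) \cup N_G(v)$. Thus
\[
\P\big(w \in G' \mid u,v \in G'\big) = (1-p)^{|N_G(w) \setminus (\{u,v\} \cup N_G(u) \cup N_G(v))|}.
\]
The exponent is now $d_G(w)$ minus the number of vertices of $N_G(w)$ lying in $\{u,v\} \cup N_G(u) \cup N_G(v)$; this is at least $d_G(w) - 2 - d_G(u,w) - d_G(v,w) \ge \Delta - 1 - 2 - 2\eta\Delta = \Delta(1 - 2\eta) - 3$ (here $u, v \in N_G(w)$ account for the $-2$). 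So each common neighbour survives with probability at most $(1-p)^{\Delta(1-2\eta)-3} \le e^{-\gamma(1-2\eta) + 3\gamma/\Delta}$, and summing over the $d_G(u,v)$ common neighbours gives $\E(d_{G'}(u,v) \mid u,v \in G') \le d_G(u,v) \cdot e^{-\gamma(1-2\eta)+3\gamma/\Delta}$. It remains to check $e^{-\gamma(1-2\eta)+3\gamma/\Delta} \le 1 - \gamma + \gamma^2$; again using $\eta \le \gamma^2/8$, $\gamma \le 1/2$, $\Delta \ge 1$ and $e^{-x} \le 1 - x + x^2/2$ this follows after bounding $\gamma(1-2\eta) - 3\gamma/\Delta \ge \gamma - \gamma^3/4 - 3\gamma/\Delta$ and absorbing the error terms — though one should double-check whether the stated hypotheses give enough room when $\Delta$ is as small as allowed, and if not, note that the interesting regime has $\Delta$ large so the small-$\Delta$ case can be verified separately or the constants adjusted.

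The main obstacle I anticipate is purely the elementary calculus at the end: verifying that $e^{-\gamma(1-c\eta) + c'\gamma/\Delta} \le 1 - \gamma + \gamma^2$ under the precise constraints \eqref{eq:param-assumptions}, being careful about the additive $O(\gamma/\Delta)$ slack coming from the ``$-2$'' or ``$-3$'' in the exponents (which is why the lemma has $\gamma^2$ rather than, say, $\gamma^2/2$ — the extra room absorbs these lower-order terms). Everything structural — the independence of the coordinate constraints defining $\{v \in G'\}$ and $\{w \in G'\}$, and the bookkeeping of which neighbours of $w$ are already forbidden — is straightforward once one writes $\P(w \in G' \mid v \in G')$ as a power of $(1-p)$ and reads off the exponent from the degree and codegree bounds in \eqref{eq:G-assumptions}.
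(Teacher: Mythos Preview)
Your approach is essentially identical to the paper's: both compute $\P(w \in G' \mid v \in G') = (1-p)^{d_w}$ with $d_w = |N_G(w)\setminus(N_G(v)\cup\{v\})|$ and bound this by $1-\gamma+\gamma^2$, the only cosmetic difference being that the paper uses $(1-t)^n \le 1 - nt + (nt)^2/2$ directly rather than routing through $e^{-nt}$. Your worry about small $\Delta$ is already handled by \eqref{eq:param-assumptions}, since $\Delta^{-1/2}\le \eta \le \gamma^2/8$ forces $\Delta \ge 64\gamma^{-4}$, which is precisely the slack needed to absorb the $O(\gamma/\Delta)$ terms.
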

\begin{proof}
Conditioned on the event $v\in G'$, we see that $A$ is a $p$-random subset of the set $V(G)\backslash (N_G(v)\cup \{v\})$. Let $w\in N_G(v)$ and let $d_w=|N_G(w)\setminus (N_G(v)\cup \{v\})|$.
 From \eqref{eq:G-assumptions} we have  $d_G(w,v)\leq \eta \Delta$ and $d_G(w) \geq \Delta - 1$ and so 
 \[ d_w \geq d_G(w)-\eta\Delta-1 \geq (1-\gamma^2/2)\Delta\, , \]
 since $\Delta^{-1/2}\leq \eta \leq \gamma^2/8$.
 It follows that 
\[
\P\big( w \in G' \,|\, v \in G' \big)=\left(1-\frac{\gamma}{\Delta}\right)^{d_w}\leq 1- \frac{\gamma}{\Delta}d_w +\frac{\gamma^2 }{2\Delta^2}d_w^2\leq 1-\gamma  + \gamma^2\, ,
\] where for the last inequality we used
$(1-\gamma^2/2)\Delta \leq d_w\leq \Delta$. So summing over $w\in N_G(v)$ we get that $$\E\big( d_{G'}(v) \hspace{.4mm} \vert\, v \in G' \big) \leq \left(1-\gamma+\gamma^2\right)d_G(v)\, .$$

Similarly, conditioned on the event $u,v\in G'$, we see $A$ is a $p$-random subset of $V(G)\backslash (N_G(u)\cup N_G(v)\cup \{u,v\})$. Let $w\in N_G(u) \cap N_G(v)$ and let $d_w'=|N_G(w)\setminus (N_G(u)\cup N_G(v)\cup \{u,v\})|$.
 From \eqref{eq:G-assumptions} we have  $d_G(w,u)\leq \eta \Delta$, $d_G(w,v)\leq \eta \Delta$ and $d_G(w) \geq \Delta - 1$ and so 
 \[ d_w' \geq d_G(w)-2\eta\Delta-2 \geq (1-\gamma^2/2)\Delta\,. \]
 It follows that 
\[
\P\big( w \in G' \,|\, u, v \in G' \big)=\left(1-\frac{\gamma}{\Delta}\right)^{d_w}\leq 1- \frac{\gamma}{\Delta}d_w +\frac{\gamma^2 }{2\Delta^2}d_w^2\leq 1-\gamma  + \gamma^2\, .
\] Finally, summing over $w\in N_G(u) \cap N_G(v)$ we get \begin{equation*}\E \big( d_{G'}(u,v)\,|\,u,v\in G' \big) \leq (1 - \gamma + \gamma^2) d_G(u,v) \, . \qedhere \end{equation*}
\end{proof}

\subsection{Concentration and the proof of Lemma~\ref{lem:degs}}

We now bound the upper tails of $d_{G'}(v)$ and $d_{G'}(u,v)$ 
for $u,v \in G'$. We do this by using a concentration inequality of Chung and Lu \cite{chung_lu}, which is a one-sided variant of Freedman's inequality. 
For a martingale $(S_m)_{m=0}^M$, its \emph{increments} are $\xi_m = S_m-S_{m-1}$.

\begin{theorem}[Theorem 6.2 in~\cite{chung_lu}]\label{lem:Freedman}
Let $(S_m)_{m=0}^M$ be a martingale with respect to a filtration $(\mathcal{F}_m)_{m=0}^M$ and with increments $(\xi_i)_{i=1}^M$ for which $\xi_i\leq R_i$ and $\mathbb{E}[\, |\xi_i|^2 \, | \, \mathcal{F}_{i-1}]\leq \sigma_i^2$, almost surely. Set
\[
b= \sum_{i=1}^M( \sigma_i^2+R_i^2)\, .
\]
Then for all $r \geq 0$,
\[
\P(S_M-S_0 \geq r) \leq \exp\left(-\frac{r^2}{2b} \right)\, .
\]
\end{theorem}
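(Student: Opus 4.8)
The plan is to run the Cram\'er--Chernoff (exponential moment) method, adapted to martingales. Fix $\lambda>0$, to be optimised only at the end. Markov's inequality applied to the nonnegative random variable $e^{\lambda(S_M-S_0)}$ gives
\[
\P(S_M-S_0\ge r)\;\le\;e^{-\lambda r}\,\E\, e^{\lambda(S_M-S_0)}\, ,
\]
so the whole problem reduces to bounding the moment generating function on the right. I would do this by peeling the increments off one at a time from the top: since $S_{M-1}-S_0$ is $\mathcal F_{M-1}$-measurable and $\xi_M=S_M-S_{M-1}$, the tower property gives
\[
\E\, e^{\lambda(S_M-S_0)}=\E\big[e^{\lambda(S_{M-1}-S_0)}\,\E[e^{\lambda\xi_M}\mid\mathcal F_{M-1}]\big],
\]
and iterating this identity down through $\mathcal F_{M-2},\dots,\mathcal F_0$ reduces the entire estimate to a single conditional bound on the increment's moment generating function.

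The crux is the following one-increment bound: \emph{if $X$ has $\E X=0$, $X\le R$ almost surely and $\E X^2\le\sigma^2$ (note $R\ge 0$ is then automatic), then $\E\, e^{\lambda X}\le\exp\big(\tfrac{\lambda^2}{2}(\sigma^2+R^2)\big)$ for every $\lambda\ge 0$.} Granting this, I apply it conditionally with $X=\xi_i$, filtration $\mathcal F_{i-1}$, and parameters $R_i,\sigma_i^2$ — here $\E[\xi_i\mid\mathcal F_{i-1}]=0$ because $(S_m)$ is a martingale — to obtain $\E[e^{\lambda\xi_i}\mid\mathcal F_{i-1}]\le\exp\big(\tfrac{\lambda^2}{2}(\sigma_i^2+R_i^2)\big)$ almost surely. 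Since each such bound is a deterministic constant, it pulls straight out of the nested expectations, and the peeling telescopes to $\E\, e^{\lambda(S_M-S_0)}\le\exp\big(\tfrac{\lambda^2}{2}\sum_{i=1}^M(\sigma_i^2+R_i^2)\big)=\exp\big(\tfrac{\lambda^2}{2}b\big)$. Thus $\P(S_M-S_0\ge r)\le\exp\big(-\lambda r+\tfrac{\lambda^2}{2}b\big)$, and choosing $\lambda=r/b$ — legitimate for every $r\ge 0$ precisely because the one-increment bound holds for all $\lambda\ge 0$, so the optimisation is unconstrained — yields the asserted $\exp(-r^2/(2b))$; the case $r=0$ (and the degenerate $b=0$) are trivial.

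The one-increment bound is the step I expect to be the real obstacle: it is essentially a sharp one-dimensional moment problem, whereas everything above is routine. The naive exponential-moment estimate, via the pointwise inequality $e^{y}\le 1+y+\tfrac{y^2}{2}e^{\max(y,0)}$ (which one gets by bounding the Taylor remainder), only yields the Bennett-shaped bound $\E\, e^{\lambda X}\le\exp\big(\tfrac{\sigma^2}{R^2}(e^{\lambda R}-1-\lambda R)\big)$; this is fine when $\lambda R$ is bounded and $\sigma^2\lesssim R^2$, but it is too lossy to produce the exact constant $\tfrac12$ in general. Instead I would reduce, by the standard convexity argument for generalised moment problems, to the extremal law — which puts mass $\tfrac{\sigma^2}{\sigma^2+R^2}$ at $R$ and mass $\tfrac{R^2}{\sigma^2+R^2}$ at $-\sigma^2/R$, these being forced by the mean and variance constraints — and then verify the resulting single-variable inequality
\[
\frac{\sigma^2}{\sigma^2+R^2}\,e^{\lambda R}+\frac{R^2}{\sigma^2+R^2}\,e^{-\lambda\sigma^2/R}\;\le\;\exp\!\Big(\tfrac{\lambda^2}{2}(\sigma^2+R^2)\Big)
\]
directly. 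The two sides are equal to first order at $\lambda=0$, the right-hand side has the strictly larger second derivative there ($\sigma^2+R^2$ versus $\sigma^2$), and it dominates for large $\lambda$ since it grows like $e^{c\lambda^2}$ against the left-hand side's $e^{\lambda R}$; ruling out a crossing in between — and thereby pinning the constant as exactly $\tfrac12$, so that the final exponent is exactly $r^2/(2b)$ — is the one genuinely delicate calculation.
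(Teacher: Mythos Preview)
The paper does not supply its own proof of this statement: it is quoted verbatim as Theorem~6.2 of Chung--Lu and used as a black box. So there is nothing to compare against on the paper's side.

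Your outline is correct and standard: Markov on $e^{\lambda(S_M-S_0)}$, peel off increments via the tower property, and reduce everything to the one-increment sub-Gaussian bound $\E[e^{\lambda X}]\le\exp\bigl(\tfrac{\lambda^2}{2}(\sigma^2+R^2)\bigr)$ for $X$ with $\E X=0$, $X\le R$, $\E X^2\le\sigma^2$. You are also right that this one-increment bound is the only nontrivial step.

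However, the extremal-distribution route you propose is more laborious than necessary, and you leave the final verification unfinished. There is a direct two-line argument that avoids the moment-problem reduction entirely. Set $Y=R-X\ge 0$, so that $\E Y=R$ and $\E Y^2=R^2+\E X^2\le R^2+\sigma^2$. Since $e^{-z}\le 1-z+z^2/2$ for all $z\ge 0$ (check: second derivative of the difference is $1-e^{-z}\ge 0$), we get
\[
\E e^{\lambda X}=e^{\lambda R}\,\E e^{-\lambda Y}\le e^{\lambda R}\Bigl(1-\lambda R+\tfrac{\lambda^2}{2}(R^2+\sigma^2)\Bigr).
\]
Writing $a=\lambda R$ and $c=\tfrac{\lambda^2}{2}(R^2+\sigma^2)$, the desired bound becomes $e^{a}(1-a+c)\le e^{c}$, i.e.\ $1+(c-a)\le e^{c-a}$, which is just $1+x\le e^{x}$. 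This closes the gap you flagged and confirms that the optimisation $\lambda=r/b$ is indeed unconstrained.
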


For $u\in G$ we will choose a bipartite graph $H$ with bipartition $X\cup Y$ in such a way that $d_{G'}(u)=|X\setminus N_H(A)|$. For $v,w\in G$ we express $d_{G'}(v,w)$ similarly. With this in mind, we prove the following concentration inequality which will imply both \eqref{eq:concentration-degs} and \eqref{eq:concentration-codegs}.

\begin{lemma}\label{lem:abstract-concentration-lemma}
Let $0<p\leq 1/2$ and $H$ be a bipartite graph with bipartition $X \cup Y$ for which $d_H(x,x') \leq \ell $ for each $x,x' \in X$. Let $A$ be a $p$-random subset of $Y$ and let $S = |X\setminus N_H(A)|$. Then, for all $r \geq 0$,
\[  \P\big( S - \E\, [S] \geq r   \big) \leq \exp\left( -\frac{ r^2 }{ 4p( e_H(X,Y) + \ell|X|^2) }\right). \]
\end{lemma}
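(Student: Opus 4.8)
The plan is to reveal the random set $A$ one vertex of $Y$ at a time and apply the Chung--Lu inequality (Theorem~\ref{lem:Freedman}) to the resulting Doob martingale. Fix an enumeration $y_1,\dots,y_M$ of $Y$, let $\mathcal{F}_m=\sigma\big(\1[y_1\in A],\dots,\1[y_m\in A]\big)$, and set $S_m=\E[S\mid\mathcal{F}_m]$; then $(S_m)_{m=0}^M$ is a martingale with $S_0=\E[S]$ and $S_M=S$, so it suffices to control the increments $\xi_m=S_m-S_{m-1}$ and their conditional second moments well enough to feed Theorem~\ref{lem:Freedman}.

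The crux is an exact formula for $\xi_m$. Writing $S=\sum_{x\in X}\1[N_H(x)\cap A=\emptyset]$ and splitting $X$ according to whether or not $x\in N_H(y_m)$ gives
\[ S=S'_m+\1[y_m\notin A]\cdot W_m,\qquad W_m:=\sum_{x\in N_H(y_m)}\1\big[(N_H(x)\setminus\{y_m\})\cap A=\emptyset\big], \]
where both $S'_m$ and $W_m$ depend only on $(\1[y_j\in A])_{j\neq m}$. Since the indicators $(\1[y_j\in A])_j$ are mutually independent, conditioning additionally on $\1[y_m\in A]$ does not change $\E[S'_m\mid\cdot]$ or $\E[W_m\mid\cdot]$, so the $S'_m$ term cancels in $\xi_m$ and one is left with
\[ \xi_m=\big(\1[y_m\notin A]-(1-p)\big)\,Z_m,\qquad Z_m:=\E[W_m\mid\mathcal{F}_{m-1}]. \]
Here $Z_m$ is $\mathcal{F}_{m-1}$-measurable, and $0\le Z_m\le|N_H(y_m)|=d_H(y_m)$ since $0\le W_m\le d_H(y_m)$; this last bound is what lets us take the parameters below deterministic.

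With this formula the inputs to Theorem~\ref{lem:Freedman} are immediate. As $\1[y_m\notin A]-(1-p)\in\{p,-(1-p)\}$ and $Z_m\ge0$, we get $\xi_m\le pZ_m\le p\,d_H(y_m)=:R_m$; and since $\1[y_m\in A]$ is independent of $\mathcal{F}_{m-1}$, $\E[\xi_m^2\mid\mathcal{F}_{m-1}]=p(1-p)Z_m^2\le p\,d_H(y_m)^2=:\sigma_m^2$. Hence, using $p\le1/2$,
\[ b=\sum_{m=1}^M(\sigma_m^2+R_m^2)=(p+p^2)\sum_{y\in Y}d_H(y)^2\le 2p\sum_{y\in Y}d_H(y)^2. \]
Double counting paths of length two with midpoint in $Y$ gives $\sum_{y\in Y}d_H(y)^2=\sum_{x,x'\in X}d_H(x,x')$, and splitting into the diagonal terms (contributing $\sum_{x\in X}d_H(x)=e_H(X,Y)$) and the at most $|X|^2$ off-diagonal terms (each at most $\ell$) yields $\sum_{y\in Y}d_H(y)^2\le e_H(X,Y)+\ell|X|^2$, so $b\le 2p\big(e_H(X,Y)+\ell|X|^2\big)$. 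Theorem~\ref{lem:Freedman} then gives $\P(S-\E[S]\ge r)\le\exp(-r^2/(2b))\le\exp\big(-r^2/(4p(e_H(X,Y)+\ell|X|^2))\big)$, as claimed.

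The only genuinely delicate point is the cancellation in the increment formula: one must verify that the ``part of $S$ not involving $y_m$'' really drops out of $\xi_m$, which is precisely where the mutual independence of the $\1[y_j\in A]$ (hence their conditional independence given $\mathcal{F}_{m-1}$) enters. Once the identity $\xi_m=(\1[y_m\notin A]-(1-p))Z_m$ with $0\le Z_m\le d_H(y_m)$ is established, the remainder is a routine substitution into Freedman's inequality together with the elementary count $\sum_{y}d_H(y)^2=\sum_{x,x'}d_H(x,x')$.
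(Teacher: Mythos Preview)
Your proof is correct and follows essentially the same approach as the paper: the same Doob martingale, the same Chung--Lu/Freedman inequality, and the same double-counting identity $\sum_{y}d_H(y)^2=\sum_{x,x'}d_H(x,x')$. Your exact increment formula $\xi_m=(\1[y_m\notin A]-(1-p))Z_m$ is in fact slightly cleaner than the paper's case analysis based on the explicit expression $\P(x\in X'\mid\cF_t)=(1-p)^{d_t(x)}\1(x\notin N(A_t))$, and it yields the marginally sharper $\sigma_m^2=p\,d_H(y_m)^2$ rather than the paper's $(p+p^2)d_H(y_m)^2$, though both lead to the same final bound $b\le 2p\sum_y d_H(y)^2$.
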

\begin{proof}
Write $Y = \{y_1,\ldots,y_M\}$ and let $\cF_t$ be the $\sigma$-algebra generated by $A \cap \{y_1,\ldots,y_t\}$. We now define the martingale
$S_t=\E[ S \,|\, \cF_t]$ which has increments $\xi_t =  \E[ S \, | \, \cF_t] - \E[S \, | \, \cF_{t-1}]$. We write $X' = X\setminus N(A)$ and express
\begin{equation}\label{eq:increments} \xi_t =  \sum_{x \in X } \Big( \P(x \in X' \,|\, \mathcal{F}_t) - \P(x \in X' \,|\, \mathcal{F}_{t-1}) \Big) \hspace{1mm} .  \end{equation}
Seeking to apply Theorem~\ref{lem:Freedman}, we will establish the following bounds on the increments $\xi_t$,
\begin{equation}\label{eq:increment-bounds} -d(y_t)\leq \xi_t \leq p\cdot d(y_t) \qquad  \text{ and } \qquad  y_t \not\in A  \,\Rightarrow  \, |\xi_t| \leq p \cdot d(y_t)\, . 
  \end{equation}
To this end, let $A_t = A \cap \{y_1,\ldots,y_t\}$ and, for $x \in X$, let $d_t(x) = |N(x) \cap \{y_{t+1},\ldots,y_m\}|$. We now note that, for all $x \in X$, we have 
\begin{equation}\label{eq:formula-for-conditional} \P( x \in X'\, \, | \,\, \mathcal{F}_t ) = (1-p)^{d_t(x)}\1( x \not\in N(A_t) )\, . \end{equation}
From this, we see that if $x \not\in N(y_t)$ then $\P(x \in X' \,|\, \mathcal{F}_t) = \P(x \in X' \,|\, \mathcal{F}_{t-1})$ and so \eqref{eq:increments} becomes 
\begin{equation} \label{eq:increments-nbd}
     \xi_t =  \sum_{x \in N(y_t) }\Big(\P(x \in X' \,| \mathcal{F}_t) - \P(x \in X' \,| \mathcal{F}_{t-1})\Big) \,.
\end{equation}
Combine \eqref{eq:formula-for-conditional} and \eqref{eq:increments-nbd} to write 
\begin{equation}\label{eq:utnotinA_t}
 \xi_t = \sum_{x \in N(y_t)} (1-p)^{d_t(x)}\1( x \not\in N(A_{t}) )-(1-p)^{d_t(x)+1}\1( x \not\in N(A_{t-1}) )   \, .
 \end{equation}
 If $y_t\not\in A$ then $\1( x \not\in N(A_{t}) )=\1( x \not\in N(A_{t-1}) )$ so $$0\leq \xi_t\leq \sum_{x \in N(y_t)} (1-p)^{d_t(x)}-(1-p)^{d_t(x)+1}\leq pd(y_t)\, .$$ On the other hand, if $y_t\in A$ then $$0\geq \xi_t\geq -\sum_{x \in N(y_t)} (1-p)^{d_t(x)+1}\geq -d(y_t)\, .$$       
This establishes~\eqref{eq:increment-bounds}. We now use the inequalities in \eqref{eq:increment-bounds} to bound $\xi_t\leq R_t:= pd(y_t)$ and $$\E[\, |\xi_t|^2 \,|\, \cF_{t-1}] \leq \sigma_t^2:= p d(y_t)^2+p^2 d(y_t)^2 \, .$$ Now we may bound the quantity $b$ that appears in Lemma~\ref{lem:Freedman}.
\[ b=\sum_{t = 1}^M \sigma_t^2+R_t^2 \leq \sum_{t = 1}^M  p d(y_t)^2 + 2p^2 d(y_t)^2  \leq 2p\sum_{t = 1}^M d(y_t)^2\, .
\]
We now observe that
\[
\sum_{t = 1}^M d(y_t)^2 = \sum_{t = 1}^M \sum_{x,x'\in X}\one[x, x'\in N_G(y_t)]
=\sum_{x,x'\in X} d_H(x,x')
\leq e(X,Y)+\ell|X|^2\, ,
\]
where the second equality comes from interchanging the sum and the inequality follows by considering the cases $x\neq x'$, $x=x'$ separately. We conclude that $b\leq 2p(e(X,Y)+\ell|X|^2)$.
    Thus, applying Theorem~\ref{lem:Freedman} yields our desired bound.\end{proof}

\begin{proof}[Proof of Lemma~\ref{lem:degs}]
To prove both \eqref{eq:concentration-degs} and \eqref{eq:concentration-codegs} we apply Lemma~\ref{lem:abstract-concentration-lemma}. For \eqref{eq:concentration-degs}, we consider the bipartite graph between $X = N_G(v)$ and $Y = N_G(X) \setminus (N_G(v)\cup\{v\})$. We use that  
\[ |X|\leq \Delta, \quad e_G(X,Y) \leq \Delta^2, \quad \text{ and } \hspace{5mm} \E\big(\hspace{.2mm} d_{G'}(v)\, \vert\, v \in G'\hspace{.2mm} \big) \leq \left(1-\gamma+\gamma^2\right)d_G(v). \]
The first two inequalities hold by the maximum degree condition on $G$ and the third holds by Lemma~\ref{lem:E-deg}. Applying Lemma~\ref{lem:abstract-concentration-lemma} with $p=\gamma/\Delta, \ell=\eta\Delta$ and $r = (\alpha-\gamma^2) \Delta\geq \alpha \Delta/2$ gives 
\[  \P\big( d_{G'}(v) > \left(1-\gamma+\alpha\right)d_G(v)\, \vert\,  v \in G' \big) \leq \exp\left( \frac{ - r^2 }{ 4p( e(X,Y) + \ell|X|^2) }\right) \leq \exp\left(-\frac{\alpha^2}{{32 \eta\gamma}} \right), \]
using that $e(X,Y)\leq \Delta^2\leq \eta \Delta^3$ thus proving \eqref{eq:concentration-degs}. 

To deal with the concentration of \emph{codegrees} we consider the bipartite graph with $X = N_G(u) \cap N_G(v)$ and $Y = N_G(X) \setminus (N_G(u)\cup N_G(v)\cup\{u,v\})$. We note that 
\[|X|\leq \eta\Delta,\quad e_G(X,Y) \leq \eta \Delta^2, \quad \text{ and } \quad \E\big(\hspace{.2mm} d_{G'}(u,v)\, \vert\, u, v \in G'\hspace{.2mm} \big)  \leq \left(1-\gamma+\gamma^2\right)d_G(u,v), \] where the first two inequalities follow from the maximum degree and codegree conditions on $G$ and the third holds by Lemma~\ref{lem:E-deg}. Thus applying Lemma~\ref{lem:abstract-concentration-lemma} with $p=\gamma/\Delta, \ell  = \eta \Delta$ and $r = (\alpha - \gamma^2)\eta \Delta \geq \alpha \eta \Delta / 2$ gives
\begin{align*}
\P\big(d_{G'}(u,v) \geq  \left(1-\gamma+\alpha\right)\eta \Delta \, \vert\, u,v \in G' \big) &\leq  \exp\left(-  \frac{(\alpha \eta \Delta / 2)^2}{4(\gamma/\Delta)(\eta \Delta^2+\eta^3\Delta^3)}\right)
\end{align*}   
which is at most $\exp\left(- \frac{\alpha^2}{32 \gamma \eta}\right)$, using that $\eta^2\Delta\geq 1$. This completes the proof.\end{proof}

\section{Proof of Theorem~\ref{thm:ind-graph} }
In this section we prove Theorem~\ref{thm:ind-graph} in three steps. We first define and prove that we can take our desired nibble out of our graph $G$. After this, we show that we can \emph{add} edges to the resulting graph to make it (nearly) regular,  without affecting the codegrees. In the final part of the section, 
we prove Theorem~\ref{thm:ind-graph} by alternatively applying these two steps 
to find a large independent set in our given graph. 

\subsection{Nibble step}
The following is  our ``nibble'' step, formally spelled out. The idea is that given a graph $G$, we will ``nibble'' a set $A \subset V(G)$ that is nearly an independent set; when we remove $A \cup N(A)$ from $G$, we can remove a small number of vertices to obtain a set $C$ whose maximum degree and co-degree have shrunk by an appropriate amount.

\begin{lemma}\label{lem:key}
For $\Delta \geq 2^{11}$, let $8\Delta^{-1/8}\leq \gamma \leq 1/2$ and $n\geq \Delta^4$. Now let $G$
be a graph on $n$ vertices for which 
\[ d(v) \in \{\Delta,\Delta-1\} \quad \text{ and }
 \quad d(u,v) \leq 2^{-6}\gamma^3\Delta (\log \Delta )^{-1},\]
for all $u,v \in V(G)$. Then for any $\alpha\geq 2\gamma^2$ there exists 
\[ A\subset V(G) \qquad \text{  and  } \qquad C\subset V(G)\setminus (A\cup N_G(A)) \] so that $A$ satisfies 
\[|A|\geq(1-\alpha)\gamma n/\Delta, \qquad e(G[A])\leq \gamma^2 n/\Delta
\]
and $C$ satisfies
$$ |C|\geq (1-\gamma-\alpha)n, \quad \Delta(G[C]) \leq (1-\gamma+\alpha)\Delta, \quad \Delta_2(G[C])\leq  (1-\gamma+\alpha)\max\{\Delta_2(G),2\sqrt{\Delta}\}.$$
\end{lemma}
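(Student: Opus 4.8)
The plan is to prove Lemma~\ref{lem:key} by applying the nibble of Lemma~\ref{lem:degs} once, using the first moment method to show that a random $p$-set $A$ (with $p = \gamma/\Delta$) is large and nearly independent, and then using Lemma~\ref{lem:degs} together with Markov's inequality to delete a small set of ``bad'' vertices whose degrees or codegrees failed to shrink. The final set $C$ will be $G' = G \setminus (A \cup N_G(A))$ with these bad vertices removed.

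\begin{proof}[Proof sketch of Lemma~\ref{lem:key}]
First I would record that the hypotheses imply the structural assumptions needed earlier: set $\eta = 2^{-6}\gamma^3 (\log\Delta)^{-1}$ so that $\Delta_2(G) \le \eta\Delta$, and check that \eqref{eq:param-assumptions} holds, i.e.\ $\Delta \ge 1$, $\gamma \le 1/2$, and $\Delta^{-1/2} \le \eta \le \gamma^2/8$. The lower bound $\eta \ge \Delta^{-1/2}$ follows from $\gamma \ge 8\Delta^{-1/8}$ (so $\gamma^3 \ge 2^9 \Delta^{-3/8}$, hence $\eta \ge 2^3 \Delta^{-3/8}(\log\Delta)^{-1} \ge \Delta^{-1/2}$ for $\Delta \ge 2^{11}$), and the upper bound is clear since $2^{-6}\gamma(\log\Delta)^{-1} \le 1/8$.

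Next, let $A$ be a $p$-random subset of $V(G)$ with $p = \gamma/\Delta$. Then $\E|A| = \gamma n/\Delta$ and $\E\, e(G[A]) = p^2 e(G) \le p^2 \Delta n / 2 = \gamma^2 n/(2\Delta)$. By Markov, $\P(e(G[A]) \ge \gamma^2 n/\Delta) \le 1/2$. For the lower bound on $|A|$, since $|A| \sim \mathrm{Bin}(n, p)$ with $np = \gamma n/\Delta \ge \gamma \Delta^3 \to \infty$ (using $n \ge \Delta^4$), a Chernoff bound gives $\P(|A| \le (1-\alpha)\gamma n/\Delta) \le \exp(-\alpha^2 \gamma n/(2\Delta)) = o(1)$, which is certainly less than $1/4$. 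Also, $\E|G'| = \E|V(G) \setminus (A \cup N_G(A))|$: for each vertex $v$, $\P(v \in G') = \P(v \notin A)\P(N_G(v) \cap A = \emptyset \mid v \notin A) = (1-p)^{1 + d_G(v)} \ge (1-p)^{\Delta+1} \ge 1 - (\Delta+1)p \ge 1 - \gamma - \gamma\Delta^{-1} \ge 1-\gamma - \alpha/2$ (absorbing the small correction into $\alpha \ge 2\gamma^2 \ge 2\Delta^{-1/4} \gg \Delta^{-1}$). So $\E(n - |G'|) \le (\gamma + \alpha/2)n$, and by Markov $\P(|G'| < (1-\gamma-\alpha)n + (\text{small slack})) $ is bounded away from $1$; more carefully, $\P(n - |G'| \ge (\gamma + 3\alpha/4)n) \le (\gamma + \alpha/2)/(\gamma + 3\alpha/4) < 1$, with a gap of order $\alpha$, so this event has probability at most, say, $1 - \alpha/8$... here I would instead just take a union bound budget: I want $|G'| \ge (1 - \gamma - \alpha/2)n$ with probability $\ge 1/4$, which follows from Markov applied to $n - |G'|$ with the bound $\E(n-|G'|) \le (\gamma + \alpha/2 - \alpha/8)n$ after slightly sharpening the estimate of $(1-p)^{\Delta+1}$ using the second-order term.

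Now for the bad vertices. By \eqref{eq:concentration-degs} of Lemma~\ref{lem:degs} (applied with the present $\alpha$, valid since $\alpha \in [2\gamma^2, \gamma]$ — I would note $\alpha \le \gamma$ can be assumed WLOG, else the conclusion is weaker), for each $v$, $\P(d_{G'}(v) \ge (1-\gamma+\alpha)d_G(v) \mid v \in G') \le \exp(-\alpha^2/(32\gamma\eta)) = \exp(-\alpha^2 2^6 (\log\Delta)/(32 \gamma^4)) \le \exp(-2\log\Delta) = \Delta^{-2}$, using $\alpha \ge 2\gamma^2$ so $\alpha^2/\gamma^4 \ge 4$. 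Similarly \eqref{eq:concentration-codegs} gives $\P(d_{G'}(u,v) \ge (1-\gamma+\alpha)\eta\Delta \mid u,v \in G') \le \Delta^{-2}$. Let $B_1 = \{v \in G' : d_{G'}(v) \text{ too large}\}$ and $B_2 = \{v \in G' : \exists u,\ d_{G'}(u,v) \text{ too large}\}$. Then $\E|B_1| \le n\Delta^{-2}$ and, since there are at most $n\Delta$ pairs $\{u,v\}$ with finite codegree in $G'$ (as $u \in N_{G'}(w), v \in N_{G'}(w)$ forces a common neighbour), $\E|B_2| \le 2 \cdot n\Delta \cdot \Delta^{-2} = 2n/\Delta$. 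Hmm, $2n/\Delta$ is too large to discard directly — but here I would be more careful: a pair with positive codegree in $G'$ has a common neighbour, and we only care about pairs with codegree exceeding $2\sqrt{\Delta}$, so actually I should bound $|B_2|$ differently. Let me instead observe that if $\Delta_2(G) \le 2\sqrt{\Delta}$ already then the codegree bound on $C$ is automatic from the degree bound on $C$ combined with $\Delta_2(G[C]) \le \Delta(G[C])$... no. The clean approach: set $C = G' \setminus (B_1 \cup B_2)$. We need $|B_1 \cup B_2| \le (\alpha/4) n$ say. For $B_2$, note that the relevant pairs $\{u,v\}$ all lie within a common neighbourhood, so $|\{u,v : d_{G'}(u,v) > 0\}| \le \sum_{w} \binom{d_{G'}(w)}{2} \le n \Delta^2/2$; then $\E|B_2| \le 2 \cdot (n\Delta^2/2) \cdot \Delta^{-2} = n$, still too big. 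The fix is that Lemma~\ref{lem:degs}'s codegree bound is conditional on $u,v \in G'$, and for a typical pair this probability is tiny — but I realize the right move is to only union-bound over pairs with $d_G(u,v) > \sqrt{\Delta}$ in the \emph{original} graph when $\Delta_2(G) > 2\sqrt\Delta$: if $\Delta_2(G) \le 2\sqrt\Delta$ then we may take the codegree bound on $C$ for granted by monotonicity ($\Delta_2(G[C]) \le \Delta_2(G) \le 2\sqrt\Delta \le (1-\gamma+\alpha)\cdot 2\sqrt\Delta$ fails only if $1 - \gamma + \alpha < 1$, i.e.\ need $\alpha \ge \gamma$, again WLOG-able or handled by including such pairs in $B_2$). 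I would reconcile this in the writeup by treating the two regimes $\Delta_2(G) \le 2\sqrt\Delta$ and $\Delta_2(G) > 2\sqrt\Delta$ separately, only deleting codegree-bad vertices in the latter regime and using that then there are at most $n\Delta$ relevant pairs via a neighbourhood-counting argument, giving $\E|B_2| = o(n)$.

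Putting it together: with positive probability (a union bound over the finitely many bad events, each of probability $< 1/4$ or $o(1)$), there is a choice of $A$ with $|A| \ge (1-\alpha)\gamma n/\Delta$, $e(G[A]) \le \gamma^2 n/\Delta$, $|G'| \ge (1-\gamma-\alpha/2)n$, $|B_1 \cup B_2| \le (\alpha/4)n$ (by Markov on the expectations computed above, which are $o(n)$). Setting $C = G' \setminus (B_1 \cup B_2)$ gives $|C| \ge (1-\gamma-\alpha)n$, and every $v \in C$ has $d_C(v) \le d_{G'}(v) \le (1-\gamma+\alpha)\Delta$ and every pair in $C$ has $d_C(u,v) \le d_{G'}(u,v) \le (1-\gamma+\alpha)\max\{\Delta_2(G), 2\sqrt\Delta\}$. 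The main obstacle, as the discussion above shows, is controlling $|B_2|$: a naive union bound over all pairs is too lossy, and one must exploit that pairs with positive codegree in $G'$ are confined to neighbourhoods (so there are only $\lesssim n\Delta$ of them) and separate the easy regime $\Delta_2(G) \le 2\sqrt\Delta$, where the $2\sqrt\Delta$ floor makes vertex deletion unnecessary, from the main regime.
\end{proof}
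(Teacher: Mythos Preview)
Your overall strategy is exactly the paper's: take $A$ $p$-random, set $G' = G\setminus(A\cup N_G(A))$, delete a small set $B$ of vertices whose degree or codegree failed to shrink, and use Markov plus Lemma~\ref{lem:degs}. But your execution of the codegree step has a genuine gap. You apply Lemma~\ref{lem:degs} with $\eta = 2^{-6}\gamma^3(\log\Delta)^{-1}$, which only yields $d_{G'}(u,v)\le(1-\gamma+\alpha)\eta\Delta$ for good pairs; this is \emph{weaker} than the required $(1-\gamma+\alpha)\max\{\Delta_2(G),2\sqrt\Delta\}$ whenever the true $\Delta_2(G)$ is much smaller than the hypothesis bound. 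The paper instead sets $\eta=\max\{\Delta_2(G),2\sqrt\Delta\}/\Delta$ from the outset, checks this still satisfies \eqref{eq:param-assumptions}, and thereby gets the correct threshold directly. With this choice the two-regime split becomes clean: if $\Delta_2(G)\le\sqrt\Delta$ the bad codegree event is vacuous (since $d_{G'}(u,v)\le\Delta_2(G)\le\sqrt\Delta<(1-\gamma+\alpha)\cdot 2\sqrt\Delta$), and otherwise $\eta\le 2\Delta_2(G)/\Delta\le 2^{-5}\gamma^3(\log\Delta)^{-1}$, which is what drives the exponent.

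Your detour through counting pairs is also unnecessary and stems from an arithmetic slip: your own computation gives exponent $2(\alpha^2/\gamma^4)\log\Delta\ge 8\log\Delta$, hence per-pair probability $\Delta^{-8}$, not $\Delta^{-2}$. More to the point, the union bound should be done \emph{per vertex}: for fixed $u$, the only relevant $v$ lie in $N_G(N_G(u))$, so there are at most $\Delta^2$ of them, and $\Delta^2\exp(-\alpha^2/(32\gamma\eta))\le\alpha/16$ handles it. Summing over $u$ then gives $\E|B|\le \alpha n/8$ directly, with no need to count pairs globally or to argue about monotonicity in the small-$\Delta_2$ regime.
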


\begin{proof}
Take $A$ to be a $p$-random subset of $V(G)$ with $p = \gamma/\Delta$ and notice that, for example by Chebyshev's inequality,
\[ |A|\geq (1-\alpha)\gamma n/\Delta, \quad e(G[A])\leq  \gamma^2 n/\Delta \quad  \text{ and } \quad |A\cup N_G(A)|\leq (\gamma+\alpha/2)n,\] with probability at least $1/2$. Let $G'=G\setminus  (A\cup N_G(A))$ and let $B \subseteq V(G')$ be 
\[B = \big\{ u : d_{G'}(u)\geq (1-\gamma+\alpha)\Delta \big\} \cup \big\{ u : \exists v\in V(G') \text{ s.t. } d_{G'}(u,v)\geq (1-\gamma+\alpha)\eta\Delta\, \big\} \]
where we define $\eta = \max\{\Delta_2(G),2\sqrt{\Delta}\}/\Delta$.  We now set $C = V(G')\setminus B$ so that the second and third conclusions pertaining to the set $C$ are automatically satisfied. Thus it only remains to show that we can take $|B| \leq \alpha n/2$. For this we prove
\begin{equation}\label{eq:B-exp} \E\, |B|\leq \alpha n/8\end{equation}
and thus $|B|\leq \alpha n/2$ with probability at least $3/4$ by Markov's inequality. 

Now we note that our choice of parameters satisfy \eqref{eq:param-assumptions} so we may apply Lemma~\ref{lem:degs}. Therefore we have that  \begin{equation}\label{eq:lemdegs-app}
    \P\big( d_{G'}(u)\geq (1-\gamma+\alpha)\Delta\,  \vert\, u \in G' \big)\leq \exp\left(-\frac{\alpha^2}{32\gamma \eta}\right)\, .
\end{equation}
If $\eta=\Delta_2/\Delta \leq 2^{-6}\gamma^3(\log \Delta)^{-1}$ then since $\alpha\geq 2\gamma^2$ we have $\alpha^2/(32\gamma \eta)\geq \gamma^3/(8\eta) \geq  8\log\Delta$. If on the other hand $\eta=2\Delta^{-1/2}$, then since $\gamma\geq 8\Delta^{-1/8}$ we have $\alpha^2/(32\gamma \eta)\geq \gamma^3/(8\eta)\geq 2^6\Delta^{1/8}$. In either case the right hand side of~\eqref{eq:lemdegs-app} is at most $\alpha/16$.


If $\Delta_2(G)\leq \sqrt{\Delta}$ then $\P\big(\exists v: d_{G'}(u,v)\geq (1-\gamma+\alpha)\eta \Delta \,|\, u \in G'\big)=0$, since $\eta\Delta = 2\sqrt{\Delta}$ so we may assume $\Delta_2(G)>\sqrt{\Delta}$ and consequently 
\begin{align}\label{eq:etaEst}
\eta\leq 2\Delta_2(G)/\Delta\leq 2^{-5}\gamma^3(\log \Delta)^{-1}\, .
\end{align}
Finally we note that
\[ \P\big(\exists v: d_{G'}(u,v)\geq (1-\gamma+\alpha)\eta \Delta \,|\, u \in G'\big) \leq \sum_{v\in N_G(N_{G}(u))}\P\big( d_{G'}(u,v)\geq (1-\gamma+\alpha)\eta \Delta  \,|\,  u,v \in G'\big)  \nonumber \]
which, using Lemma~\ref{lem:degs} and~\eqref{eq:etaEst}, is at most
\begin{equation} \label{eq:lemcodegs-app} \Delta^2 \cdot \max_{u,v}\, \P\big( d_{G'}(u,v)\geq (1-\gamma+\alpha)\eta \Delta \,|\, u,v \in G'\big)   \leq \Delta^2 \exp\left(-\frac{\alpha^2}{32\gamma\eta}\right)   \leq \alpha/16\,. \end{equation}
Thus summing over \eqref{eq:lemdegs-app} and \eqref{eq:lemcodegs-app} establishes \eqref{eq:B-exp}. This completes the proof.
\end{proof}

\subsection{Regularization step}
We now show that we can add edges to make our graph almost regular. The proof is a simple greedy algorithm where we add an edge as long at it doesn't make the maximum degree or codegree of our initial graph increase by more than 1.

\begin{lemma}\label{lem:add_edges}
For $\Delta\geq 2$, let $n \geq 2\Delta^4$ and let $G$ be a graph on $n$ vertices with  $\Delta(G)\leq \Delta$. Then there exists a graph $\Gbar$ with $V(\Gbar)= V(G)$ and $E(\Gbar)\supset E(G)$ so that 
\[ d_{\Gbar}(v) \in \{ \Delta, \Delta+1 \} \qquad \text{ and } \qquad \Delta_2(\Gbar)\leq \max\{\Delta_2(G),1\}.\]
\end{lemma}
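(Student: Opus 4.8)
\textbf{Proof proposal for Lemma~\ref{lem:add_edges}.}

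The plan is to run the obvious greedy edge-addition procedure and show it cannot get stuck until the minimum degree is at least $\Delta$. Start with $G_0 = G$ and, as long as some vertex $v$ has $d_{G_i}(v) \le \Delta - 1$, look for a partner $w \ne v$ with $v \not\sim w$ in $G_i$ such that adding $\{v,w\}$ keeps all degrees at most $\Delta+1$ and does not create any new pair of codegree exceeding $\max\{\Delta_2(G),1\}$. If such a $w$ exists, set $G_{i+1} = G_i + \{v,w\}$; since each added edge strictly increases $\sum_u d(u)$ and this sum is bounded by $n(\Delta+1)$, the process terminates, say at $\Gbar := G_{\text{final}}$. At termination every vertex has degree in $\{\Delta,\Delta+1\}$: indeed degrees never exceed $\Delta+1$ by construction, degrees only increase, and if some vertex had degree $\le \Delta-1$ at the end we will show below that a legal move is still available, contradicting termination. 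Also $\Delta_2(\Gbar) \le \max\{\Delta_2(G),1\}$ because we never allow a move that would violate this, and $E(\Gbar) \supseteq E(G)$ trivially. So the whole content is the claim that a deficient vertex always has a legal partner.

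The key counting step: suppose at some stage $G' = G_i$ we have a vertex $v$ with $d_{G'}(v) \le \Delta - 1$; I must produce an admissible $w$. Call $w$ \emph{forbidden} if $w = v$, or $w \in N_{G'}(v)$, or $d_{G'}(w) = \Delta+1$, or adding $\{v,w\}$ would push some codegree above $m := \max\{\Delta_2(G),1\}$. The first three categories together forbid at most $1 + (\Delta-1) + n$... that is not quite the right way to bound the third category, so instead bound directly: the number of $w$ with $w=v$ or $w \in N_{G'}(v)$ is at most $\Delta$. For the degree-$(\Delta+1)$ obstruction, note $\sum_u d_{G'}(u) \le n(\Delta+1)$ is not tight enough on its own, but we don't actually need to avoid high-degree $w$ by counting — rather, observe that adding $\{v,w\}$ raises $d(v)$ to at most $\Delta \le \Delta+1$ (fine) and raises $d(w)$ by one, so we only need $d_{G'}(w) \le \Delta$; the set of $w$ with $d_{G'}(w) = \Delta+1$ is not something we can bound by $n/2$ cheaply, so instead I will argue: every vertex of $G'$ has degree at most $\Delta+1$, and the \emph{current} graph $G'$ still has many vertices of degree $\le \Delta$ — in fact, since no vertex ever exceeds degree $\Delta+1$ and $v$ itself has degree $\le \Delta-1$, at minimum $v$ is available as a target for \emph{other} deficient vertices, but that circular reasoning must be replaced. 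The clean fix is: we do not need to choose $w$ deficient; we just need $d_{G'}(w)\le\Delta$, and the number of $w$ with $d_{G'}(w)=\Delta+1$ is at most $n$, which is useless. Therefore the correct approach is to pick, among all vertices of degree $\le \Delta-1$, and among all non-adjacent pairs of such vertices — here both endpoints have degree $\le \Delta-1 < \Delta+1$ so the degree obstruction vanishes entirely; one shows that the set of such deficient vertices, call it $D$, satisfies $|D| \ge 2$ (if $v$ is the \emph{only} deficient vertex then $\sum_u d_{G'}(u)$ is odd-versus-even... actually $\sum_u d_{G'}(u)$ is even, and if all but $v$ have degree $\in\{\Delta,\Delta+1\}$ with $v$ having degree $\le \Delta-1$, a parity/summation argument forces a second deficient vertex unless $n$ is tiny), and then it suffices to find one non-adjacent deficient pair not creating a bad codegree. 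So: restrict the search to pairs inside $D$.

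Now the codegree count, which is the heart of the matter and the step I expect to be the main obstacle to state cleanly. Fix a deficient $v$. Adding $\{v,w\}$ with $w\in D$ increases, for each common neighbour $u \in N_{G'}(v)\cap N_{G'}(w)$, nothing (their codegree with third parties is unaffected); the pairs whose codegree goes up by one are exactly $\{v, z\}$ for $z \in N_{G'}(w)$ and $\{w,z\}$ for $z \in N_{G'}(v)$. We must ensure none of these already sits at the threshold $m$. The number of $w \in D$ that are \emph{bad} for this reason is at most: (number of $z$ with $d_{G'}(v,z)=m$) times (max number of $w$ adjacent to a given such $z$) plus the symmetric term. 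A vertex $z$ has $d_{G'}(v,z) \le m$ common neighbours with $v$, there are at most $d_{G'}(v) \cdot m / 1 \le \Delta m$ ... more carefully: the number of $z$ with $d_{G'}(v,z)\ge 1$ is at most $\Delta(G') \cdot \Delta(G') \le (\Delta+1)^2$, and each contributes a forbidden $w$-set of size at most $d_{G'}(z) \le \Delta+1$, giving at most $(\Delta+1)^3$ forbidden $w$ from the first term and likewise from the second, so at most $2(\Delta+1)^3$ forbidden partners in total; adding the $\le \Delta+1$ partners excluded for adjacency/self, the number of excluded $w$ is at most $2(\Delta+1)^3 + (\Delta+1) \le 3(\Delta+1)^3$. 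Hence a legal partner exists provided $|D| > 3(\Delta+1)^3 + 1$. Finally one shows $|D|$ is large whenever it is nonempty: if $D \ne \varnothing$ then $\sum_{u} d_{G'}(u) \le (n-|D|)(\Delta+1) + |D|(\Delta-1) = n(\Delta+1) - 2|D|$, but also $\sum_u d_{G'}(u) = 2e(G') \ge 2e(G) \ge 0$ gives nothing — instead use that degrees only ever increase from $G$ and $G$ had $\Delta(G) \le \Delta$, so actually a cleaner route: if at the end $D_{\text{final}} \ne \varnothing$ pick $v\in D_{\text{final}}$; the bound $|D_{\text{final}}| \le 3(\Delta+1)^3 + 1 \le \Delta^4 \le n/2$ is exactly what we must contradict, and contradiction comes because with $|D| \le n/2$ strictly less than $n - 3(\Delta+1)^3-1$ (using $n \ge 2\Delta^4$), wait — we need $|D|$ \emph{large} not small to find a partner, so the real statement is: \emph{if} $D_{\text{final}}\neq\varnothing$ then, picking $v\in D_{\text{final}}$, either $|D_{\text{final}}|>3(\Delta+1)^3+1$ and we find a legal move (contradiction), or $|D_{\text{final}}|\le 3(\Delta+1)^3+1$, in which case all but at most $3(\Delta+1)^3+1$ vertices have degree exactly $\Delta$ or $\Delta+1$, and then a summation/parity argument (or simply: $v$ can be paired with some $w$ of degree exactly $\Delta$ that is non-adjacent to it and codegree-safe, since there are $\ge n - \Delta - 1 - 2(\Delta+1)^3 > 0$ such $w$ using $n\ge 2\Delta^4$) again produces a legal move. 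Either way we contradict termination, so $D_{\text{final}}=\varnothing$, i.e.\ every degree lies in $\{\Delta,\Delta+1\}$. The one genuine subtlety to get right in the writeup is this last dichotomy — making sure that whether or not $v$ has a deficient non-neighbour, it still has \emph{some} codegree-safe non-neighbour of degree $\le \Delta$, which holds comfortably once $n \ge 2\Delta^4 \gg (\Delta+1)^3$.
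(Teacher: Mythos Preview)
Your greedy idea and the $2(\Delta+1)^3$ bound on codegree-unsafe partners are sound and close in spirit to the paper's argument, but there is a real gap in the closing dichotomy. In the second branch---$|D_{\mathrm{final}}|\le 3(\Delta+1)^3+1$---you need a partner $w$ with $d(w)\le\Delta$, and you count ``$\ge n-\Delta-1-2(\Delta+1)^3$ such $w$''. That count subtracts only self, neighbours, and codegree-unsafe vertices; it never subtracts the vertices that already have degree $\Delta+1$. Nothing in the procedure as you have described it controls how many of those there are: every time a deficient vertex is paired with a partner already at degree $\Delta$ you manufacture a new degree-$(\Delta+1)$ vertex, and you give no bound on how often this happens. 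So as written the argument does not rule out termination with $D\neq\emptyset$.

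The paper closes exactly this hole by splitting the greedy into two explicit phases. In Phase~1 it adds edges only between \emph{pairs of deficient} vertices (both of degree $<\Delta$, and taken at graph distance $\ge 4$); since both endpoints have degree $\le\Delta-1$ no vertex ever exceeds degree $\Delta$ in this phase, and it halts with all remaining deficient vertices within distance $3$ of one another, whence $|D|\le\Delta^3$. Only in Phase~2 are partners of degree $\le\Delta$ allowed: the residual deficit is now at most $|D|\cdot\Delta\le\Delta^4$, so at most $\Delta^4$ further edges are added and hence at most $\Delta^4$ vertices acquire degree $\Delta+1$. With $n\ge 2\Delta^4$ this leaves at least $\Delta^4$ vertices of degree $\le\Delta$ at every step of Phase~2, comfortably more than the $O(\Delta^3)$ excluded by the distance condition. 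Note too that the paper replaces your direct codegree check by the single requirement $\dist_{G_i}(u,v)\ge 4$: adding such an edge can raise any codegree only from $0$ to $1$, giving $\Delta_2(\Gbar)\le\max\{\Delta_2(G),1\}$ for free and avoiding the $(\Delta+1)^3$ bookkeeping entirely.
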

\begin{proof}
We construct $\Gbar \supseteq G$ by forming a sequence of graphs $G_{i+1} = G_i + e_{i+1}$, where $G_0 = G$. We add edges in two phases: $i \in [T]$ and $i \in [T+1, T']$ for $T$ and $T'$ to be defined later. For this, we define 
\[ D_i = \{v \in V(G_i) : d_{G_i}(v) < \Delta \}, \quad \text{ and } \quad S_i = \{ v \in V(G_i) : d_{G_i}(v) \leq \Delta \},\] the set of ``deficient'' vertices and the set of vertices that we can still add an edge to. 

In the first phase, $i \in [T]$, we choose $u,v \in D_i$ 
for which $\dist_{G_i}(u,v) \geq 4$, if such a pair exists, and then fix $e_{i+1} = \{u,v\}$. If no such pair exists we conclude this first phase with the graph $G_T$. For the second phase, we choose $u \in D_i$ and 
$v \in S_i$ with $\dist_{G_i}(u,v) \geq 4$ and then set $e_{i+1} = \{u,v\}$.
If no such pair exists, we stop this second phase with the graph $G_{T'}$. We then set $\Gbar = G_{T'}$, which is our final graph.

We claim that $\Gbar$ has no vertices of degree $<\Delta$; in particular, we claim that the second phase terminates due to having $D_{T'} = \emptyset$.   

At the end of the first phase, we have $\dist_{G_T}(u,v) \leq 3$, for all $u,v \in D_T$. Since $\Delta(G_T) \leq \Delta$, this implies that $|D_T| \leq \Delta^3$. By the definition of the  second phase, we have
\[ |S_{i+1}| \geq |S_{i}|-1 \quad \text{ and } \quad \sum_{v\in D_{i+1}}(\Delta - d_{G_{i+1}}(v)) \leq \left(\sum_{v\in D_{i}}(\Delta - d_{G_{i}}(v))\right)-1,\]
for $i\geq T$.  Since $|D_T| \leq \Delta^3$ we have $$\left(\sum_{v\in D_{T}}(\Delta - d_{G_{T}}(v))\right) \leq \Delta^4$$
and so it will take at most $\Delta^4$ steps of the second phase until we have $D_{T'} = \emptyset$ and so $T' - T \leq \Delta^4$.  Note that since $|S_T| = n \geq 2\Delta^4$ we thus have $|S_i| \geq \Delta^4$ for each $i \in [T,T']$.  This implies that for each $v \in D_{T}$ and $i \in [T,T']$ we have that $|S_{i} \setminus N^{(3)}(v)| \geq |S_i| -  \Delta^3 > 0$ where $N^{(3)}(v)$ denotes the set of all vertices whose distance to $v$ is at most $3$.  In particular, at each step in the second phase, if $D_i \neq \emptyset$, then for each $v \in D_i$ there is some $u \in S_{i}$ with $\dist_{G_i}(u,v) \geq 4$.  This proves $D_{T'} = \emptyset$ and in particular all vertices of $\Gbar$ have degree $\Delta$ or $\Delta+1$. 

Finally, we claim that for all distinct $u,v \in V(\Gbar)$, we have $d_{\Gbar}(u,v)\leq \max\{1,d_{G}(u,v)\}$. To see this note that if $d_{G_{i+1}}(u,v)\geq
d_{G_{i}}(u,v)+1$ and $d_{G_i}(u,v)>0$ then $e_{i+1} = \{u',v'\}$ must be such that 
$\dist_{G_i}(u',v') \leq 3$, which contradicts our choice of $e_{i+1}$. 
\end{proof}

\subsection{The proof of Theorem~\ref{thm:ind-graph}}

Given $0<\eps<1/2$, let 
\[ \Delta\geq \exp\left((80/\eps)\log (80/\eps)\right) \quad \text{ so that } \quad \eps\geq \frac{40(\log \log \Delta +2) }{\log \Delta}\, .\]
We prove Theorem~\ref{thm:ind-graph} by iteratively applying the regularization step (Lemma~\ref{lem:add_edges}) followed by the nibble step (Lemma~\ref{lem:key}). To set up our iteration we fix $\gamma=(\log \Delta)^{-2}$, $\alpha=2\gamma^2$ and $q = 1-\g+2\alpha$. We define 
\begin{equation}
\Delta_i = \lceil q^i(\Delta(G)+1)\rceil \qquad \text{ and } \qquad \Delta'_i = q^i\Delta_2(G). 
\end{equation}
We will construct a sequence of graphs $G = G_0, \Gbar_0, G_1, \Gbar_1, \cdots  ,G_T$ with 
\[ V(G_0) = V(\Gbar_0) \supset V(G_1) = V(\Gbar_1) \supset \cdots \supset V(G_T) \not= \emptyset ,\]
where $\Gbar_i$ is obtained from $G_i$ by applying our regularization step, Lemma~\ref{lem:add_edges}, with parameter $\Delta_i-1$, and $G_{i+1} = G_{i}[C_{i+1}]$, where $C_{i+1} = C$ comes from Lemma~\ref{lem:key} applied to $\Gbar_i$ with parameter $\Delta_i$.  We note that since $\Gbar_i$ consists of $G_i$ with additional edges, then each independent set of $\Gbar_i$ corresponds to an independent set in $G_i$. For each step $i$ we will ensure 
\begin{equation}\label{eq:ind-degree-cond} \Delta(G_i) \leq \Delta_i-1 \quad \text{ and }\quad  d_{\Gbar_i}(v) \in \{ \Delta_i-1,\Delta_i \}, \text{ for all } v \in \Gbar_i \end{equation}
and that, in each step,
\begin{equation}\label{eq:ind-codegree-cond} \Delta_2(G_i), \Delta_2(\Gbar_i) \leq \max\{ \Delta'_i,2\sqrt{\Delta_i}\}\,.
\end{equation}
Maintaining these conditions will allow us to ensure 
\begin{equation}\label{eq:ind-G_i-size} |G_i| \geq (q-3\alpha)^i n \end{equation}
which allows us to iterate our process for $T$ steps where
\begin{equation} \label{eq:def-T}
T= \gamma^{-1}\big(\log \Delta -32(\log\log\Delta+2)\big)\, .
\end{equation} 
This choice of $T$ ensures that $\Delta_i\geq (4 \log \Delta)^{16}$ throughout the process.

To build our independent set, note that from Lemma~\ref{lem:key} we obtain a sequence of disjoint vertex sets $A_1,\ldots, A_T$ with the property that there are no edges between  $A_i,A_j$ in $G$, for $i\not= j$. To finish, we take maximum independent sets
$I_i \subseteq A_i$ and define $I = I_1 \cup \cdots \cup I_T$. We show that for each $i \in [T]$ we have 
\begin{equation} \label{eq:ind-A_iandI_i}  
|I_i| \geq (1-8\gamma^{1/2})\gamma n/\Delta \end{equation}
so that 
\[|I| \geq T (1-8\gamma^{1/2})\gamma n/\Delta \geq (1-\eps)\frac{n\log \Delta}{\Delta},   \] as desired. 
We now formally prove Theorem~\ref{thm:ind-graph}.
\begin{proof}[Proof of Theorem~\ref{thm:ind-graph}]
First note that if $H$ is the union of disjoint copies of $G$ then $\frac{\alpha(G)}{|G|}=\frac{\alpha(H)}{|H|}$. Therefore by taking sufficiently many disjoint copies of our initial graph, we may assume without loss of generality that 
\begin{align}\label{eq:DiSjointWlog}
(q-3\alpha)^T n\geq 2(\Delta+1)^4\, .
\end{align}
 We begin by proving the statements  \eqref{eq:ind-degree-cond}, \eqref{eq:ind-codegree-cond} and \eqref{eq:ind-G_i-size} hold for $G_i$ by induction on $i \in [T]$. Throughout we write $\Delta = \Delta(G)$.

For $i = 0 $ we note that the three statements trivially hold for $G_0$ (not necessarily $\Gbar_0$, yet). Now assuming that they hold for $G_i$, we look to apply Lemma~\ref{lem:add_edges} with parameter $\Delta_i-1$. For this we need only that $|G_i| \geq 2\Delta_i^4$, which follows from the inductive assumption $|G_i| \geq (q-3\alpha)^i n$ and~\eqref{eq:DiSjointWlog}.
Thus we obtain a graph $\Gbar_i$ for which $d_{\Gbar_i}(v) \in \{\Delta_i-1,\Delta_i\}$ and 
$\Delta_2(\Gbar_i) \leq \max\{\Delta_2(G_i),1\}$. Since~\eqref{eq:ind-codegree-cond} holds for $G_i$, this shows $\Gbar_i$ satisfies \eqref{eq:ind-degree-cond} and \eqref{eq:ind-codegree-cond}.

We now look to apply Lemma~\ref{lem:key} to $\Gbar_i$.  
By~\eqref{eq:ind-codegree-cond} we know $\Delta_2(\Gbar_i)\leq \max\{q^i\Delta_2(G),2\sqrt{\Delta_i}\}$ and thus we check two cases. If $q^i\Delta_2(G)\geq 2\sqrt{\Delta_i}$ then 
\[\Delta_2(\Gbar_i)\leq q^i \Delta_2(G) = (q^i\Delta) (\Delta_2(G)/\Delta) \leq 2^{-6}\Delta_i \cdot  \gamma^3 (\log \Delta_i)^{-1} \, ,\] since $q^i\Delta\leq \Delta_i$ and $\Delta_2(G)\leq \Delta (2 \log \Delta)^{-7}$.
On the other hand if $2\sqrt{\Delta_i}\geq q^i\Delta_2(G)$ then 
\[\Delta_2(\Gbar_i)\leq 2\sqrt{\Delta_i}  = 2 \Delta_i (\Delta_i)^{-1/2} \leq 2 \Delta_i (2 \log \Delta)^{-7} \leq 2^{-6}\Delta_i \cdot  \gamma^3 (\log \Delta_i)^{-1}\] 
where we used that $\Delta_i \geq \Delta_T \geq (4 \log \Delta)^{16}$.   This lower bound on $\Delta_i$ also shows that $$\gamma = (\log \Delta)^{-2}\geq 8\Delta_i^{-1/8}$$ for all $i$. 
Thus we can apply Lemma~\ref{lem:key} to obtain a graph $G_{i+1} = G_{i}[C_{i+1}]$ for which 
\[ |G_{i+1}|\geq (1-\gamma-\alpha)|G_i|, \quad \Delta(G_{i+1}) \leq (1-\gamma+\alpha)\Delta_i, \quad \Delta_2(G_{i+1}) \leq (1-\gamma+\alpha) \max\{\Delta_i',2\sqrt{\Delta_i}\}.\]
Thus we have established \eqref{eq:ind-degree-cond}, \eqref{eq:ind-codegree-cond} and \eqref{eq:ind-G_i-size}, by induction. 

We now demonstrate that we also have \eqref{eq:ind-A_iandI_i}. Indeed in each step $i \in [T]$, by our application of Lemma~\ref{lem:key}, we obtained  $A_i$ satisfying 
\[ |A_i| \geq (1-\alpha)\gamma|G_i|/\Delta_i \qquad \text{ and } \qquad  e(G[A_i])\leq \gamma^2 |G_i|/\Delta_i\,.   \]
We see that at most a $2\gamma$ fraction of  vertices in $A_i$ have non-zero degree; deleting vertices with non-zero degree gives an independent set $I_i \subset A_i$ for which 
\[ |I_i| \geq (1-2\gamma)|A_i|\geq (1-2\gamma)(1-\alpha) \gamma |G_i|/\Delta_i \, .\] Now using the lower bound on $|G_i|$ in \eqref{eq:ind-G_i-size} and the definition of $\Delta_i$ we get \[ |I_i|\geq (1-2\gamma)(1-3\alpha)^{i+1} \gamma n/\Delta \, .\]
Since $i \leq T$ and $\alpha=2(\log \Delta)^{-4}$ we see that $(1 - 3\alpha)^{i+1} \geq (1 - 7 \gamma^{1/2})$.  This establishes \eqref{eq:ind-A_iandI_i} and therefore Theorem~\ref{thm:ind-graph}. 
\end{proof}

\section*{Acknowledgments}
The authors thank Rob Morris for comments on a previous draft. Matthew Jenssen is supported by a UKRI Future Leaders Fellowship MR/W007320/1.  Marcus Michelen is supported in part by NSF grants DMS-2137623 and DMS-2246624.

\appendix

 \section{Proof of Theorem~\ref{thm:sphere-codes}}
The only modification needed to adapt the proof of Theorem~\ref{thm:main} to prove Theorem~\ref{thm:sphere-codes} is in the proof of the  ``discretisation step'' Lemma~\ref{prop:preprocessing}. First let us introduce some notation.

Throughout this section we fix $\theta\in (0,\pi/2)$. For a measurable set $A \subseteq S^{d-1}$, let $s(A)$ denote the normalized surface area of $A$ so that $s(S^{d-1}) =1$.
For $x \in S^{d-1}$, let 
\[ C_{x}(\theta) = \{ y \in S_{d-1}: \langle x, y \rangle \ge \cos \theta \} \qquad \text{ and }  \qquad s_d(\theta) = s(C_{x}(\theta)). \]
For a finite $X \subseteq S^{d-1}$ we define the graph $G(X,\theta)$ to be the graph on vertex set $X$ where two distinct $x,y \in X$ are connected whenever $\langle x,y \rangle \geq \cos \theta$.
Thus an independent set in $G(X,\theta)$ is a spherical code of angle $\theta$. We have the following analogue of Lemma~\ref{prop:preprocessing}.

\begin{lemma}\label{prop:preprocessingsphere}

    For all $d$ sufficiently large, there exists $X \subset S^{d-1}$ so  that 
\[ |X| \geq   \big(1-1/d\big) \Delta (s_d(\theta))^{-1}, \quad \text{ where } \quad \Delta = s_d(\theta)\left(\frac{\sqrt{d}}{2\log d}\right)^d, \]
and if $G = G(X,\theta)$, for all $v \in X$ we have 
\begin{equation*} 
    d_G(v) \leq \Delta\big(1 + \Delta^{-1/3}  \big) \qquad  \text{ and } \qquad  d_G(u,v) \leq 2\Delta  \cdot e^{-c(\log d)^2 }, \end{equation*}
for each distinct $u,v \in V(G)$ where $c = c_\theta>0$ is a constant depending only on $\theta$.\end{lemma}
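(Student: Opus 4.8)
The plan is to mirror the proof of Lemma~\ref{prop:preprocessing} essentially verbatim, replacing Lebesgue measure on $\R^d$ by the normalised surface measure $s(\cdot)$ on $S^{d-1}$ and replacing balls $B_x(2r_d)$ by spherical caps $C_x(\theta)$. Concretely, I would sample a Poisson point process $\bX$ on $S^{d-1}$ with intensity $\lambda = \Delta/s_d(\theta) = (\sqrt d/(2\log d))^d$ with respect to $s(\cdot)$ (so that the expected number of points in a cap $C_x(\theta)$ is exactly $\lambda\, s_d(\theta) = \Delta$), and then delete every point $x \in \bX$ for which either $|\bX \cap C_x(\theta)| \geq \Delta(1+\Delta^{-1/3})$ or $|\bX \cap C_x(\theta) \cap C_y(\theta)| \geq 2\Delta e^{-c(\log d)^2}$ for some $y \in \bX$. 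As before, deleting points never creates new violations, so the surviving set $X$ has the required degree and codegree bounds by construction; the only thing to check is that $\E|\bX \setminus X| \leq (1/d)\,\E|\bX|$, which follows from two lemmas analogous to Lemma~\ref{lem:prec-bad-1} and Lemma~\ref{lem:prec-bad-2}.

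The first of these (the degree bound) is identical to the proof of Lemma~\ref{lem:prec-bad-1}: by the Mecke equation for the Poisson process on $S^{d-1}$, $\E|\{x\in\bX : |\bX\cap C_x(\theta)|\geq s\}| = \lambda \int_{S^{d-1}} \P(|\bX\cap C_x(\theta)|\geq s-1)\,ds(x)$, and since $|\bX\cap C_x(\theta)|$ is Poisson with mean $\lambda\, s_d(\theta) = \Delta$, the Poisson tail bound \eqref{eq:Poisson-tail} with $t = \Delta^{-1/3}-\Delta^{-1}$ gives a probability at most $\exp(-\Delta^{1/3}/4) \leq (2d)^{-1}$ for $d$ large. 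For the codegree bound, the key geometric input that replaces Fact~\ref{fact:sphere-intersection} is an estimate of the form $s\big(C_x(\theta)\cap C_y(\theta)\big) \leq s_d(\theta)\cdot e^{-c_\theta\, \phi^2 d}$ (for some constant $c_\theta>0$ depending only on $\theta$) whenever the angle $\phi$ between $x$ and $y$ is not too large, together with the trivial fact that the intersection is empty once $\phi \geq 2\theta$. This is a standard spherical-cap volume computation: the intersection of two caps of angular radius $\theta$ whose centres subtend angle $\phi$ is contained in a cap of strictly smaller angular radius, and normalised cap areas decay like $(\sin(\text{radius}))^d$ up to polynomial factors, so shrinking the radius by a definite amount (depending on $\theta$ and $\phi$) costs an exponential factor $e^{-\Omega_\theta(\phi^2 d)}$.

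Granting that estimate, the proof of the codegree lemma follows Lemma~\ref{lem:prec-bad-2} line by line: split $\P(\exists y\in\bX : I_{x,y}\geq 2\Delta e^{-c(\log d)^2}-1)$ via Markov into the contribution of $y$ at small angle (say angle $\leq (\log d)/\sqrt d$ from $x$), which is bounded by $\lambda \cdot s\big(C_x((\log d)/\sqrt d)\big) \leq (\sqrt{2\pi e}/2)^d \leq (4d)^{-1}$ using the cap-volume estimate \eqref{eq:BasicEst}-style bound, and the contribution of $y$ at larger angle, where Mecke plus the geometric estimate give $\E I_{x,y} \leq \lambda\, s(C_x(\theta)\cap C_y(\theta)) \leq \Delta e^{-c_\theta(\log d)^2}$ for a suitable $c_\theta$, so that the Poisson tail \eqref{eq:Poisson-tail} makes the whole second term at most $(4d)^{-1}$. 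Setting $c = c_\theta/2$ (say) absorbs the constants. Combining, $\E|S_1|+\E|S_2|\leq (1/d)\E|\bX|$ and $\E|\bX| = \lambda\, s(S^{d-1}) = \Delta (s_d(\theta))^{-1}$, which gives the claimed bound on $|X|$. The main obstacle — really the only genuinely new point — is pinning down the spherical two-cap intersection estimate with an explicit dependence on $\theta$; everything else is a transcription of the $\R^d$ argument with volumes replaced by surface measures.
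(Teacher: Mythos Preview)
Your proposal follows the paper's proof essentially verbatim: sample a Poisson process on $S^{d-1}$ at the stated intensity, delete points violating the degree or codegree thresholds, and establish the two deletion lemmas via the Mecke equation plus Poisson tails, with the spherical two-cap intersection bound (proved in the paper as Fact~\ref{fact:sphere-intersectionsphere} with explicit constant $c(\theta)=(\cot^2\theta)/16$) as the only new geometric ingredient. One arithmetic slip to fix: your claimed small-angle bound $(\sqrt{2\pi e}/2)^d$ is growing, not decaying, since $\sqrt{2\pi e}\approx 4.13>2$; using the cap-area asymptotic~\eqref{eq:CapArea} with $\phi=(\log d)/\sqrt d$ instead gives $\lambda\,s_d(\phi)$ of order $2^{-d}$, which is what you actually need.
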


If we assume Lemma~\ref{prop:preprocessingsphere}, Theorem~\ref{thm:sphere-codes} follows quickly. 

\begin{proof}[Proof of Theorem~\ref{thm:sphere-codes}]
    Let ${X}$ and $G$ be as in Lemma~\ref{prop:preprocessingsphere}. Apply Theorem~\ref{thm:ind-graph} to find $I \subseteq X$, which is an independent set in $G$ such that 
\[ |I| \geq  (1-o(1))\frac{|X|\log \Delta }{\Delta} \geq (1-o(1)) \frac{d\log d}{2s_d(\theta)}\, .\]
Note that $I$ corresponds to a spherical code of angle $\theta$. 
\end{proof}

To prove Lemma~\ref{prop:preprocessingsphere}, we set $\lambda =\left(\frac{\sqrt{d}}{2\log d}\right)^d$ and write $\bX\sim \mathrm{Po}_\lambda(S^{d-1})$ to denote a Poisson process of intensity $\lambda$ on $S^{d-1}$ with $s(\cdot)$ as the underlying measure. We first prove an analogue of Fact~ \ref{fact:sphere-intersection}. For this we use the following formula for the area of a spherical cap.

\begin{align}\label{eq:CapArea}
s_d(\theta)&= \frac{1}{\sqrt{\pi}}  \frac{ \Gamma(d/2)}{ \Gamma((d-1)/2)}\int_{0}^\theta \sin^{d-2} x  \, dx \, =  (1+o(1)) \frac{1}{ \sqrt{2 \pi d}} \cdot \frac{\sin^{d-1} \theta}{\cos \theta}\, ,
\end{align}
where $o(1)$ tends to zero as $d\rightarrow \infty$.

\begin{fact}\label{fact:sphere-intersectionsphere}
Let $\tau\in (0,2\theta)$ and let $x,y\in S^{d-1}$ be such that $\langle x, y \rangle=\cos(\tau)$. Then
\[
s(C_{\theta}(x)\cap C_{\theta}(y))\leq (1+o(1))s_d(\theta)e^{-c(\theta) \cdot \tau^2d}\, ,
\]
where $c(\theta)=(\cot^2\theta)/16$.
\end{fact}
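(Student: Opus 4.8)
\textbf{Proof proposal for Fact~\ref{fact:sphere-intersectionsphere}.}
The plan is to bound the intersection $C_\theta(x)\cap C_\theta(y)$ by a single spherical cap and then estimate the area of that cap via the formula \eqref{eq:CapArea}, in direct analogy with the Euclidean argument of Fact~\ref{fact:sphere-intersection}. First I would set up coordinates: place $x,y\in S^{d-1}$ with $\langle x,y\rangle=\cos\tau$ and let $z=(x+y)/\|x+y\|$ be the (normalized) midpoint direction, noting $\|x+y\|=2\cos(\tau/2)$. The key geometric observation is that if $w\in C_\theta(x)\cap C_\theta(y)$, i.e. $\langle w,x\rangle\ge\cos\theta$ and $\langle w,y\rangle\ge\cos\theta$, then $\langle w,x+y\rangle\ge 2\cos\theta$, hence $\langle w,z\rangle\ge \cos\theta/\cos(\tau/2)$. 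Writing $\cos\theta/\cos(\tau/2)=\cos\theta'$ for the appropriate $\theta'<\theta$, this shows
\[
C_\theta(x)\cap C_\theta(y)\subseteq C_{\theta'}(z),\qquad \cos\theta' = \frac{\cos\theta}{\cos(\tau/2)}\,.
\]
So it remains to show $s_d(\theta')\le (1+o(1))s_d(\theta)e^{-c(\theta)\tau^2 d}$ with $c(\theta)=(\cot^2\theta)/16$.

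For this estimate I would use \eqref{eq:CapArea}, which gives $s_d(\theta')/s_d(\theta)=(1+o(1))\frac{\sin^{d-1}\theta'\cos\theta}{\sin^{d-1}\theta\cos\theta'}$; the ratio $\cos\theta/\cos\theta'=\cos(\tau/2)$ is $1+O(\tau^2)$ and contributes only a harmless $(1+o(1))$-type factor, so the main term is $(\sin\theta'/\sin\theta)^{d-1}$. Thus I need a good lower bound on $\sin\theta-\sin\theta'$, equivalently an upper bound on $\sin\theta'/\sin\theta$, coming from the relation $\cos\theta'=\cos\theta\sec(\tau/2)$. Since $\cos\theta'\ge\cos\theta$ we have $\theta'\le\theta$, and differentiating (or a second-order Taylor expansion of $\cos\theta'$ around $\theta'=\theta$) gives $\cos\theta' - \cos\theta = \cos\theta(\sec(\tau/2)-1) = (1+o(1))\cos\theta\cdot\tau^2/8$, whence $\theta-\theta' = (1+o(1))\frac{\cos\theta}{\sin\theta}\cdot\frac{\tau^2}{8}=(1+o(1))\frac{\cot\theta}{8}\tau^2$. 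Then $\sin\theta'/\sin\theta \le 1 - \cot\theta\,(\theta-\theta') + O((\theta-\theta')^2) \le 1-(1+o(1))\frac{\cot^2\theta}{8}\tau^2 \le \exp(-(1+o(1))\frac{\cot^2\theta}{8}\tau^2)$, and raising to the power $d-1$ yields $(\sin\theta'/\sin\theta)^{d-1}\le \exp(-(1+o(1))\frac{\cot^2\theta}{8}\tau^2 d)$. Since $\frac{1}{8}>\frac{1}{16}$, absorbing the $(1+o(1))$ loss into the constant gives the claimed bound with $c(\theta)=(\cot^2\theta)/16$, with room to spare.

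The slightly delicate point is making the expansions uniform in $\tau\in(0,2\theta)$: the clean Taylor estimates above are valid for small $\tau$, but for $\tau$ bounded away from $0$ (and below $2\theta$) one should argue separately. In that regime $\theta'$ is bounded away from $\theta$ by a constant depending on $\theta$, so $(\sin\theta'/\sin\theta)^{d-1}\le e^{-c'(\theta)d}$ for some constant $c'(\theta)>0$; since $\tau^2$ is also bounded above (by $4\theta^2$), one trivially has $e^{-c'(\theta)d}\le e^{-c(\theta)\tau^2 d}$ after possibly shrinking $c(\theta)$. (We should also note the edge case where $C_{\theta'}(z)$ is well-defined requires $\cos\theta\sec(\tau/2)\le 1$, i.e. $\tau\le 2\theta$, which is exactly the hypothesis; when $\tau$ approaches $2\theta$ the intersection shrinks to a point and the bound holds trivially.) I expect the main obstacle to be bookkeeping these uniformity issues and the various $(1+o(1))$ factors so that the final constant is cleanly $(\cot^2\theta)/16$, but no genuinely hard estimate is involved — the geometry reduces everything to the one-dimensional comparison of $\sin\theta'$ with $\sin\theta$.
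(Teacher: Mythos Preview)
Your proposal is correct and follows exactly the paper's approach: contain the intersection in the cap $C_{\theta'}(z)$ about the spherical midpoint with $\cos\theta'=\cos\theta/\cos(\tau/2)$, then compare cap areas via~\eqref{eq:CapArea}. The paper sidesteps your uniformity worries by bounding $\sin^2\theta'$ directly rather than via Taylor expansion: from $\sin^2\theta'=1-\cos^2\theta\sec^2(\tau/2)$ and the elementary inequality $\sec^2 x\ge 1+x^2$ one obtains $\sin^2\theta'\le\sin^2\theta-(\tau^2/4)\cos^2\theta$ for all $\tau\in(0,2\theta)$ at once, giving the constant $(\cot^2\theta)/16$ with no case split and no ``possibly shrinking $c(\theta)$''.
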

\begin{proof}
Without loss of generality we may assume $x=(1,0,\ldots, 0)$ and $y=(\cos(\tau), \sin(\tau),0,\ldots,0)$. Suppose now that $z\in C_{\theta}(x)\cap C_{\theta}(y)$ i.e.
$\langle x, z \rangle\geq\cos(\theta)$ and $\langle y, z \rangle\geq\cos(\theta)$. 
Let $m$ be the `spherical midpoint' between $x$ and $y$, that is, $m=(\cos(\tau/2), \sin(\tau/2),0,\ldots,0)$. 
A simple calculation shows that 
\[
\langle z,m \rangle \geq \frac{\cos \theta}{\cos(\tau/2)}\, .
\]
It follows that 
$C_{\theta}(x)\cap C_{\theta}(y)\subseteq C_{\rho}(m)$
where 
\[
\rho=\arccos\left(
\frac{\cos \theta}{\cos(\tau/2)}
\right)\, .
\]
In particular, $s(C_{\theta}(x)\cap C_{\theta}(y))\leq s_d(\rho)$.
We now estimate $s_d(\rho)$. By~\eqref{eq:CapArea}
\begin{align}\label{eq:CapRatio}
\frac{s_d(\rho)}{s_d(\theta)}=(1+o(1)) \left(\frac{\sin \rho}{\sin \theta}\right)^{d-1}  \frac{\cos \theta}{\cos \rho}\, .
\end{align}
Now,
\[
\sin^2 \rho =  1-\frac{\cos^2\theta}{\cos^2(\tau/2)} \leq 1- (1+\tau^2/4)\cos^2\theta = \sin^2\theta - (\tau^2/4)\cos^2\theta\, ,
\]
where we used the inequality $1/\cos^2x\geq 1+x^2$ for all $x\in (0,\pi/2)$.
Returning to~\eqref{eq:CapRatio} we conclude that
\[
\frac{s_d(\rho)}{s_d(\theta)}\leq(1+o(1)) \left(1-(\tau^2/4)\cot^2\theta\right)^{(d-1)/2} \cos(\tau/2)\, .
\]
The result follows.
\end{proof}

The following two lemmas now follow in a near-identical way to their counterparts in Section~\ref{sec:preprocessing}.

\begin{lemma}\label{lem:prec-bad-1sphere} Let $\bX \sim \mathrm{Po}_\lambda(S^{d-1})$. If $d$ is sufficiently large, then 
    $$\E\left|\big\{x \in \bX :  |\bX \cap C_{x}(\theta)| \geq \Delta\big(1 + \Delta^{-1/3}\big) \big\}   \right| \leq (2d)^{-1} \cdot \E|\bX| \, .  $$
\end{lemma}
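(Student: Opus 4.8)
The plan is to mirror the proof of Lemma~\ref{lem:prec-bad-1} almost verbatim, replacing the Euclidean ball $B_x(2r_d)$ with the spherical cap $C_x(\theta)$ and the Lebesgue volume with the normalized surface measure $s(\cdot)$. First I would invoke the Mecke equation \eqref{eq:palm} on the base space $\Lambda = S^{d-1}$ with the measure $s(\cdot)$: for any $s>0$,
\[
\E\bigl|\{x \in \bX : |\bX \cap C_x(\theta)| \ge s\}\bigr| = \lambda \int_{S^{d-1}} \P\bigl(|\bX \cap C_x(\theta)| \ge s-1\bigr)\, s(dx).
\]
For each fixed $x$, the quantity $|\bX \cap C_x(\theta)|$ is a Poisson random variable with mean $\lambda\, s(C_x(\theta)) = \lambda\, s_d(\theta) = \Delta$, exactly as in the Euclidean case (note $\Delta$ is defined precisely as $s_d(\theta)(\sqrt{d}/(2\log d))^d = \lambda\, s_d(\theta)$ here).

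Next I would apply the Poisson tail bound \eqref{eq:Poisson-tail} with $t = \Delta^{-1/3} - \Delta^{-1}$, giving
\[
\P\bigl(|\bX \cap C_x(\theta)| \ge \Delta + \Delta^{2/3} - 1\bigr) \le \exp\bigl(-\Delta^{1/3}/4\bigr),
\]
for all $d$ large enough; here one should observe that $\Delta = s_d(\theta)(\sqrt d/(2\log d))^d \to \infty$ since $s_d(\theta)$ decays only exponentially in $d$ (by \eqref{eq:CapArea}) while $(\sqrt d/(2\log d))^d$ grows super-exponentially, so the term $\exp(-\Delta^{1/3}/4)$ is eventually at most $(2d)^{-1}$. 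Feeding this uniform bound back into the Mecke identity and recalling $\lambda\, s(S^{d-1}) = \lambda = \E|\bX|$ — wait, more precisely $\E|\bX| = \lambda$ since $s(S^{d-1})=1$ — yields
\[
\E\bigl|\{x \in \bX : |\bX \cap C_x(\theta)| \ge \Delta(1+\Delta^{-1/3})\}\bigr| \le \lambda \cdot (2d)^{-1} = (2d)^{-1}\,\E|\bX|,
\]
which is the claim.

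There is essentially no obstacle here: the argument is structurally identical to Lemma~\ref{lem:prec-bad-1}, and the only point requiring a moment's care is confirming that $\Delta\to\infty$ as $d\to\infty$ so that the Poisson concentration kicks in — this is where the choice of intensity $\lambda = (\sqrt d/(2\log d))^d$ (rather than the sphere-packing choice) matters, and it follows immediately from the asymptotic formula \eqref{eq:CapArea} for $s_d(\theta)$, which shows $s_d(\theta) = (\sin\theta)^{(1+o(1))d}$ up to polynomial factors, so $\Delta = (\sin\theta \cdot \sqrt d/(2\log d))^{(1+o(1))d} \to \infty$ since the $\sqrt d/\log d$ factor dominates. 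The phrase ``for all $d$ sufficiently large'' in the statement gives us exactly the room needed to absorb these asymptotics.
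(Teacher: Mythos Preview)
Your proposal is correct and matches the paper's approach exactly: the paper does not even spell out a proof but simply states that the lemma ``follows in a near-identical way'' to its Euclidean counterpart Lemma~\ref{lem:prec-bad-1}, which is precisely what you carry out. Your extra remark that $\Delta\to\infty$ (via the asymptotic \eqref{eq:CapArea} for $s_d(\theta)$) is the one point genuinely needed to justify the phrase ``for $d$ sufficiently large,'' and you handle it correctly.
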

 
Let $c(\theta)=(\cot^2\theta)/16$ as in Fact~\ref{fact:sphere-intersectionsphere}.  Applying Fact~\ref{fact:sphere-intersectionsphere} in the place of Fact~\ref{fact:sphere-intersection} in the proof of Lemma~\ref{lem:prec-bad-2} shows the following:
\begin{lemma}\label{lem:prec-bad-2sphere} 
    Let $\bX \sim \mathrm{Po}_\lambda\big( S^{d-1} \big)$ and put $\eta = 2e^{-c(\theta)(\log d)^2}$. If $d$ is sufficiently large, then
    $$\E\, \big|\big\{x \in \bX : \exists \hspace{.5mm} y \in \bX : |\bX \cap C_{x}(\theta) \cap C_{y}(\theta)| \geq  \eta \Delta  \big\}   \big| \leq (2d)^{-1} \cdot \E |\bX | \,.  $$
\end{lemma}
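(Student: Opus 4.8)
\textbf{Proof plan for Lemma~\ref{lem:prec-bad-2sphere}.}
The plan is to mirror the proof of Lemma~\ref{lem:prec-bad-2} essentially line for line, substituting the spherical cap geometry for the Euclidean ball geometry. First I would apply the Mecke equation \eqref{eq:palm} (valid for Poisson processes on $S^{d-1}$ with the measure $s(\cdot)$) to reduce the claim to showing that for every $x\in S^{d-1}$,
\[ \P\big(\exists\hspace{.5mm} y\in\bX: I_{x,y}\geq \eta\Delta - 1\big) \leq (2d)^{-1}, \]
where $I_{x,y} = |\bX\cap C_x(\theta)\cap C_y(\theta)|$. Just as before, I would split this via two applications of Markov's inequality into a ``nearby'' term $\E|\bX\cap C_x(\tau_0)|$, for a suitable small angle $\tau_0$ (the analogue of $B_x(\log d)$; one would take $\tau_0$ of order $(\log d)/\sqrt{d}$ so that $s_d(\tau_0)$ is like $(\log d/\sqrt d)^{d-1}$ up to polynomial factors), plus a ``far'' term summing $\P(I_{x,y}\geq \eta\Delta - 1)$ over $y$ at angle at least $\tau_0$ from $x$.

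For the nearby term, using the cap-area formula \eqref{eq:CapArea} one computes $\lambda \cdot s_d(\tau_0)$ and checks, by the choice $\lambda = (\sqrt d/(2\log d))^d$ and the extra $\sin^{d-1}\theta$ factor hidden in $\Delta = s_d(\theta)(\sqrt d/(2\log d))^d$, that this is at most $(4d)^{-1}$ for $d$ large — here the key cancellation is between $(\sqrt d/(2\log d))^d$ and $(\tau_0\sqrt d)^{-(d-1)}$-type quantities, leaving an exponentially small factor since $\tau_0 \asymp (\log d)/\sqrt d$ makes $\tau_0 \cdot \sqrt d \asymp \log d \gg $ a constant. For the far term, one restricts (as in the Euclidean case) to $y$ within the common support, i.e. $\langle x,y\rangle \geq \cos(2\theta)$, applies Mecke again to write the term as $(\E|\bX\cap C_x(2\theta)|)\cdot \max_y \P(I_{x,y}\geq \eta\Delta-2)$, and bounds $\E I_{x,y} \leq \lambda\, s(C_x(\theta)\cap C_y(\theta)) \leq (1+o(1))\Delta \, e^{-c(\theta)\tau_0^2 d} \leq \Delta\, e^{-c(\theta)(\log d)^2/2}$ using Fact~\ref{fact:sphere-intersectionsphere} with $\tau \geq \tau_0$. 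Since $I_{x,y}$ is Poisson and $\eta\Delta = 2\Delta e^{-c(\theta)(\log d)^2}$ is comfortably above $2\E I_{x,y}$ (for $d$ large, since $e^{-c(\theta)(\log d)^2/2}$ beats $2e^{-c(\theta)(\log d)^2}$), the Poisson tail bound \eqref{eq:Poisson-tail} with $t\asymp \eta\Delta/\E I_{x,y}$ gives a bound like $2^{O(d)}\Delta\cdot\exp(-\Delta e^{-c(\theta)(\log d)^2/4})$, which is $\leq (4d)^{-1}$ because $\Delta$ is super-exponentially large while all the prefactors and the constant-in-the-exponent losses are at most exponential. Summing the two terms gives the claimed bound.

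The main obstacle — really the only place requiring care rather than transcription — is keeping the book-keeping of the $s_d(\theta)$ and $\sin^{d-1}\theta$ factors straight: because $\Delta$ here carries an extra factor of $s_d(\theta)$ compared to the Euclidean $\Delta$, one must verify that the two Markov terms still come out exponentially small after this factor is divided through, and that the choice of the cutoff angle $\tau_0$ (which should be taken of order $(\log d)/\sqrt d$, matching the role of $\log d$ in the Euclidean proof once lengths are rescaled by $\sqrt d$) makes both terms work simultaneously. One also needs $c(\theta) = (\cot^2\theta)/16 > 0$, which holds precisely because $\theta < \pi/2$; this is why the hypothesis $\theta\in(0,\pi/2)$ is used, and it is exactly the point at which the spherical argument would break down for $\theta\geq\pi/2$ (consistent with Rankin's exact result in that regime). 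Everything else — the two nested Markov steps, the second Mecke application, the Poisson tail estimate — is identical in structure to Lemma~\ref{lem:prec-bad-2}, so I would simply say ``proceeding exactly as in the proof of Lemma~\ref{lem:prec-bad-2}, with Fact~\ref{fact:sphere-intersectionsphere} in place of Fact~\ref{fact:sphere-intersection} and \eqref{eq:CapArea} in place of \eqref{eq:BasicEst}'' and then record the handful of altered estimates.
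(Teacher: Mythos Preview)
Your approach is correct and is exactly the paper's: the paper gives no proof of this lemma beyond the sentence ``Applying Fact~\ref{fact:sphere-intersectionsphere} in the place of Fact~\ref{fact:sphere-intersection} in the proof of Lemma~\ref{lem:prec-bad-2} shows the following,'' and your plan carries out precisely that substitution, with the near/far split, the two Mecke applications, and the Poisson tail all transplanted verbatim.

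One slip to fix in your far-term bookkeeping: with $\tau_0 \asymp (\log d)/\sqrt{d}$ you have $\tau_0^2 d \asymp (\log d)^2$, so Fact~\ref{fact:sphere-intersectionsphere} gives $\E I_{x,y} \leq (1+o(1))\Delta\, e^{-c(\theta)(\log d)^2}$, not the weaker $\Delta\, e^{-c(\theta)(\log d)^2/2}$ you wrote. Your parenthetical ``since $e^{-c(\theta)(\log d)^2/2}$ beats $2e^{-c(\theta)(\log d)^2}$'' has the inequality the wrong way round --- the quantity on the left is \emph{larger}, so with that bound $\eta\Delta$ would \emph{not} dominate $2\E I_{x,y}$. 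Either keep the tighter bound (so $\eta\Delta \geq (2-o(1))\E I_{x,y}$ and the Poisson tail~\eqref{eq:Poisson-tail} with $t\geq 1$ already gives $\exp(-\E I_{x,y}/3)$, which is ample), or take $\tau_0 = \sqrt{2}(\log d)/\sqrt{d}$ to buy an extra factor in the exponent; the near term $\lambda\, s_d(\tau_0) \approx 2^{-d}$ is unaffected either way.
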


We can now prove Theorem~\ref{prop:preprocessingsphere} by simply sampling $\bX \sim \mathrm{Po}_\lambda(S^{d-1})$ and deleting problematic points. 

\begin{proof}[Proof of Lemma~\ref{prop:preprocessingsphere}]
    Let $\bX\sim \mathrm{Po}_\lambda(S^{d-1})$. Let $B$ be the points $x\in \bX$ that satisfy
    \begin{equation*} |\bX \cap C_x(\theta)| \geq \Delta\big(1 + \Delta^{-1/3}\big) \quad \text{ or } \quad |\bX \cap C_x(\theta) \cap C_y(\theta)| \geq \Delta \cdot 2e^{-c_\theta(\log d)^2}\, ,\end{equation*}
    for some $y\in\bX$.
    We set $X =  \bX \setminus B$ and apply Lemma~\ref{lem:prec-bad-1sphere} and Lemma~\ref{lem:prec-bad-2sphere} to see  \[ \E|X| \geq \E |\bX| - \E |B| \geq (1-1/d) \cdot \E |\bX |. \]
 Noting that $\E |\bX| =\lambda= \Delta/s_d(\theta)$, finishes the proof.
\end{proof}

\bibliography{sphere}

\begin{thebibliography}{10}

\bibitem{ajtai1980note}
M.~Ajtai, J.~Koml\'os, and E.~Szemer\'edi.
\newblock A note on {R}amsey numbers.
\newblock {\em J. Combin. Theory Ser. A}, 29:354--360, 1980.

\bibitem{ajtai1981dense}
M.~Ajtai, J.~Koml\'{o}s, and E.~Szemer\'{e}di.
\newblock A dense infinite {S}idon sequence.
\newblock {\em European J. Combin.}, 2(1):1--11, 1981.

\bibitem{ball1992lower}
K.~Ball.
\newblock A lower bound for the optimal density of lattice packings.
\newblock {\em Int. Math. Res. Not.}, 1992(10):217--221, 1992.

\bibitem{BK-triangle-free}
T.~Bohman and P.~Keevash.
\newblock Dynamic concentration of the triangle-free process.
\newblock {\em Random Structures \& Algorithms}, 58(2):221--293, 2021.

\bibitem{charbonneau2021three}
P.~Charbonneau, P.~K. Morse, W.~Perkins, and F.~Zamponi.
\newblock Three simple scenarios for high-dimensional sphere packings.
\newblock {\em Phys. Rev. E}, 104(6):Paper No. 064612, 15, 2021.

\bibitem{chung_lu}
F.~Chung and L.~Lu.
\newblock Concentration inequalities and martingale inequalities: a survey.
\newblock {\em Internet Math.}, 3(1):79--127, 2006.

\bibitem{cohn2016packing}
H.~Cohn.
\newblock Packing, coding, and ground states.
\newblock {\em arXiv:1603.05202}, 2016.

\bibitem{cohn2016conceptual}
H.~Cohn.
\newblock A conceptual breakthrough in sphere packing.
\newblock {\em Notices Amer. Math. Soc.}, 64:102--115, 2017.

\bibitem{cohn2016sphere}
H.~Cohn, A.~Kumar, S.~D. Miller, D.~Radchenko, and M.~Viazovska.
\newblock The sphere packing problem in dimension 24.
\newblock {\em Ann. of Math.}, 185:1017--1033, 2017.

\bibitem{cohn2014sphere}
H.~Cohn and Y.~Zhao.
\newblock Sphere packing bounds via spherical codes.
\newblock {\em Duke Math. J.}, 163:1965--2002, 2014.

\bibitem{davenport1947hlawka}
H.~Davenport and C.~A. Rogers.
\newblock Hlawka's theorem in the geometry of numbers.
\newblock {\em Duke Math. J.}, 14:367--375, 1947.

\bibitem{davies2015independent}
E.~Davies, M.~Jenssen, W.~Perkins, and B.~Roberts.
\newblock Independent sets, matchings, and occupancy fractions.
\newblock {\em J. Lond. Math. Soc.}, 96(1):47--66, 2017.

\bibitem{davies2018average}
E.~Davies, M.~Jenssen, W.~Perkins, and B.~Roberts.
\newblock On the average size of independent sets in triangle-free graphs.
\newblock {\em Proc. Amer. Math. Soc.}, 146:111--124, 2018.

\bibitem{fernandez2021new}
I.~G. Fern{\'a}ndez, J.~Kim, H.~Liu, and O.~Pikhurko.
\newblock New lower bounds on kissing numbers and spherical codes in high
  dimensions.
\newblock {\em arXiv preprint arXiv:2111.01255}, 2021.

\bibitem{FGM-triangle-free}
G.~Fiz~Pontiveros, S.~Griffiths, and R.~Morris.
\newblock The triangle-free process and the {R}amsey number {$R(3,k)$}.
\newblock {\em Mem. Amer. Math. Soc.}, 263(1274):v+125, 2020.

\bibitem{frisch1987nonuniform}
H.~L. Frisch and J.~K. Percus.
\newblock Nonuniform classical fluid at high dimensionality.
\newblock {\em Phys. Rev. A}, 35(11):4696--4702, 1987.

\bibitem{frisch1999high}
H.~L. Frisch and J.~K. Percus.
\newblock High dimensionality as an organizing device for classical fluids.
\newblock {\em Phys. Rev. E}, 60(3):2942--2948, 1999.

\bibitem{frisch1985classical}
H.~L. Frisch, N.~Rivier, and D.~Wyler.
\newblock Classical hard-sphere fluid in infinitely many dimensions.
\newblock {\em Phys. Rev. Lett.}, 54(19):2061--2063, 1985.

\bibitem{groemer1963existenzsatze}
H.~Groemer.
\newblock Existenzs\"{a}tze f\"{u}r {L}agerungen im {E}uklidischen {R}aum.
\newblock {\em Math. Z.}, 81:260--278, 1963.

\bibitem{hales2005proof}
T.~C. Hales.
\newblock A proof of the {K}epler conjecture.
\newblock {\em Ann. of Math.}, 162(3):1065--1185, 2005.

\bibitem{janson2011random}
S.~Janson, T.~{\L}uczak, and A.~Rucinski.
\newblock {\em Random graphs}.
\newblock Wiley-Interscience Series in Discrete Mathematics and Optimization.
  Wiley-Interscience, New York, 2000.

\bibitem{jenssen2018kissing}
M.~Jenssen, F.~Joos, and W.~Perkins.
\newblock On kissing numbers and spherical codes in high dimensions.
\newblock {\em Adv. Math.}, 335:307--321, 2018.

\bibitem{jenssen2019hard}
M.~Jenssen, F.~Joos, and W.~Perkins.
\newblock On the hard sphere model and sphere packings in high dimensions.
\newblock {\em Forum Math. Sigma}, 7:Paper No. e1, 19, 2019.

\bibitem{kabatiansky1978bounds}
G.~A. {Kabatjanski\u\i} and V.~I. {Leven\v ste\u\i n}.
\newblock Bounds for packings on the sphere and in space.
\newblock {\em Problemy Pereda\v ci Informacii}, 14:3--25, 1978.

\bibitem{krivelevich2004lower}
M.~Krivelevich, S.~Litsyn, and A.~Vardy.
\newblock A lower bound on the density of sphere packings via graph theory.
\newblock {\em Int. Math. Res. Not.}, 2004:2271--2279, 2004.

\bibitem{last2017lectures}
G.~Last and M.~Penrose.
\newblock {\em Lectures on the Poisson process}, volume~7.
\newblock Cambridge University Press, 2017.

\bibitem{lowen2000fun}
H.~L\"{o}wen.
\newblock Fun with hard spheres.
\newblock In {\em Statistical physics and spatial statistics ({W}uppertal,
  1999)}, volume 554 of {\em Lecture Notes in Phys.}, pages 295--331. Springer,
  Berlin, 2000.

\bibitem{Min05}
H.~Minkowski.
\newblock Diskontinuit\"atsbereich f\"ur arithmetische {\"a}quivalenz.
\newblock {\em J. Reine Angew. Math.}, 129:220--274, 1905.

\bibitem{parisi2006amorphous}
G.~Parisi and F.~Zamponi.
\newblock Amorphous packings of hard spheres for large space dimension.
\newblock {\em J. Stat. Mech. Theory Exp.}, (3):P03017, 15, 2006.

\bibitem{parisi2010mean}
G.~Parisi and F.~Zamponi.
\newblock Mean-field theory of hard sphere glasses and jamming.
\newblock {\em Rev. Modern Phys.}, 82(1):789, 2010.

\bibitem{rankin1955closest}
R.~A. Rankin.
\newblock The closest packing of spherical caps in {$n$} dimensions.
\newblock {\em Proc. Glasgow Math. Assoc.}, 2:139--144, 1955.

\bibitem{rodl}
V.~R\"{o}dl.
\newblock On a packing and covering problem.
\newblock {\em European J. Combin.}, 6(1):69--78, 1985.

\bibitem{rogers1947existence}
C.~A. Rogers.
\newblock Existence theorems in the geometry of numbers.
\newblock {\em Ann. of Math.}, 48:994--1002, 1947.

\bibitem{sardari2023new}
N.~T. Sardari and M.~Zargar.
\newblock New upper bounds for spherical codes and packings.
\newblock {\em Mathematische Annalen}, pages 1--51, 2023.

\bibitem{torquato2008}
A.~Scardicchio, F.~H. Stillinger, and S.~Torquato.
\newblock Estimates of the optimal density of sphere packings in high
  dimensions.
\newblock {\em J. Math. Phys.}, 49(4):043301, 15, 2008.

\bibitem{shearer1983note}
J.~B. Shearer.
\newblock A note on the independence number of triangle-free graphs.
\newblock {\em Discrete Math.}, 46:83--87, 1983.

\bibitem{thue1911dichteste}
A.~Thue.
\newblock {\em {\"U}ber die dichteste Zusammenstellung von kongruenten Kreisen
  in einer Ebene}.
\newblock Number~1. J. Dybwad, 1911.

\bibitem{torquato2006new}
S.~Torquato and F.~H. Stillinger.
\newblock New conjectural lower bounds on the optimal density of sphere
  packings.
\newblock {\em Experiment. Math.}, 15(3):307--331, 2006.

\bibitem{vance2011improved}
S.~Vance.
\newblock Improved sphere packing lower bounds from {H}urwitz lattices.
\newblock {\em Adv. Math.}, 227:2144--2156, 2011.

\bibitem{venkatesh2012note}
A.~Venkatesh.
\newblock A note on sphere packings in high dimension.
\newblock {\em Int. Math. Res. Not.}, 2013:1628--1642, 2013.

\bibitem{viazovska2017sphere}
M.~S. Viazovska.
\newblock The sphere packing problem in dimension 8.
\newblock {\em Ann. of Math.}, 185:991--1015, 2017.

\end{thebibliography}
\bibliographystyle{abbrv}

\end{document}